\documentclass[11pt,a4paper, twoside]{amsart}
\usepackage[latin1]{inputenc}

\usepackage{tikz}

\usepackage{amsmath}
\usepackage{amsfonts}
\usepackage{amssymb}
\usepackage{latexsym}

\usepackage{hyperref}
\hypersetup{backref, pdfpagemode=FullScreen, colorlinks=true,
  citecolor=magenta, linkcolor=cyan, urlcolor=blue}

\usepackage{mathtools}
\mathtoolsset{showonlyrefs}

\usepackage{graphicx}
\usepackage{xcolor}
\usepackage{graphicx} 

\usepackage{version} 


\theoremstyle{change}
\newtheorem{theo}{Theorem}[section]
\newtheorem*{theo*}{Theorem}
\newtheorem{lemma}[theo]{Lemma}

\newtheorem{prop}[theo]{Proposition}

\newtheorem{rem}[theo]{Remark}

\def\A{\mathcal{A}}
\def\vf{\mathbf{f}}
\newcommand{\vol}{\mathrm{vol}\,}
\newcommand{\scal}[1]{\langle#1\rangle}
\newcommand{\bnorm}[1]{\left\|#1\right\|}
\newcommand{\con}{\mathrm{cone}\,}
\newcommand{\Ksn}{{\mathcal K}_{(s)}^n} 
\newcommand{\Knull}{{\mathcal K}_{(o)}^n} 
\newcommand{\dual}[1]{{#1}^\star}
\newcommand{\R}{\mathbb{R}} 
\newcommand{\Z}{\mathbb{Z}} 
\newcommand{\cs}{\mathrm{cs}\,}
\newcommand{\ip}[2]{\left\langle #1,#2\right\rangle}
\newcommand{\Kn}{{\mathcal K}^n} 
\newcommand{\va}{{\boldsymbol a}}

\newcommand{\ve}{{\boldsymbol e}}

\newcommand{\vu}{{\boldsymbol u}}
\newcommand{\vv}{{\boldsymbol v}}
\newcommand{\vw}{{\boldsymbol w}}
\newcommand{\vx}{{\boldsymbol x}}
\newcommand{\vy}{{\boldsymbol y}}
\newcommand{\vz}{{\boldsymbol z}}

\newcommand{\vone}{{\boldsymbol 1}}

\newcommand{\vnull}{{\boldsymbol 0}}

\newcommand{\conv}{\mathrm{conv}\,} 
\newcommand{\suk}{\mathrm{h}}
\DeclareMathOperator{\lin}{lin}
\newcommand{\inte}{\mathrm{int}\,}  
\newcommand{\ov}{\overline}

\numberwithin{equation}{section}

\begin{document}
	
\title[successive minima-type inequalities for the polar]{On successive minima-type inequalities for the polar of a
  convex body}
\author{Martin Henk}\author{Fei Xue}
\address{Institut f\"ur Mathematik, Technische Universit\"at Berlin,
	Sekr. Ma 4-1, Strasse des 17 Juni 136, D-10623 Berlin, Germany}
\email{\{henk,xue\}@math.tu-berlin.de}
\thanks{The research of the second named author was supported by a PhD scholarship of the Berlin
	Mathematical School}
	
\begin{abstract}
Motivated by conjectures  of Mahler and Makai Jr., we study bounds on the
volume of a convex body in terms of the successive minima of its polar
body.
\end{abstract}

\maketitle

\section{Introduction}
Let $\Kn$ be the set of all convex bodies, i.e., compact convex sets, in
the $n$-dimensional Euclidean space $\R^n$ with non-empty interior. Let
$\ip{\,\cdot}{\cdot}$ and $\|\cdot\|$  be the standard inner product and
the Euclidean norm in $\R^n$, respectively. We denote by $\Knull\subset \Kn$
the set of all  convex bodies, having the origin as an interior point,
i.e., $\vnull\in\inte(K)$, and by $\Ksn\subset\Knull$ those bodies
which are symmetric with respect to $\vnull$, i.e., $K=-K$. 
The volume of a set $S\subset\R^n$ is its
$n$-dimensional Lebesgue measure and it is denoted by $\vol(S)$.  

For $K\in\Knull$ and $1\leq i\leq n$ let 
\begin{equation*}
  \lambda_i(K)=\min \left\{\lambda>0 : \dim(\lambda\,K\cap\Z^n)\geq i\right\}
\end{equation*}
be its $i$th successive minimum, which is the smallest positive dilation factor
$\lambda$ such that $\lambda\,K$ contains $i$ linearly independent lattice points
of the lattice $\Z^n$. The so-called second theorem of Minkowski on successive
minima provides optimal upper and lower bounds on the volume of a
symmetric 
convex body $K\in\Ksn$  in terms  of its successive
minima. These bounds can be easily generalized to the class
$K\in\Kn$ as follows  
\begin{equation}
 \label{eq:minkowski}
    \frac{2^n}{n!}\prod_{i=1}^{n}\frac{1}{\lambda_i(\cs({K}))}\leq\vol(K)\leq
     2^n\prod_{i=1}^{n}\frac{1}{\lambda_i(\cs({K}))}.
\end{equation}
where $\cs(K)=\frac{1}{2}(K-K)\in\Ksn$ is the central symmetral  of $K$. The
$n$-dimensional unit cube $C_n$ shows that the upper bound is optimal, and 
its polar body $\dual{C_n}$, the $n$-dimensional cross-polytope,
attains the lower bound. Here, in general, the polar body of $K\in\Kn$
is defined as
\begin{equation*}
  \dual{K} =\left\{\vy\in\R^n : \ip{\vx}{\vy}\leq 1\text{ for all
    }\vx\in K\right\}.
\end{equation*}
Mahler \cite{Mah39-2} studied for $K\in\Ksn$ the volume product
$M(K)=\vol(K)\vol(\dual{K})$ and conjectured 
\begin{equation}
   M(K)\geq M(C_n)= \frac{4^n}{n!}.
\label{conj:mahler}
\end{equation} 
Mahler \cite{Mah39} verified the conjecture in dimension 2, and there was a recent announcement of
its proof in dimension 3 by \cite{IS17}.  In the general case, it is
conjectured that for $K\in\Kn$  
\begin{equation}
   M(K)\geq M(S_n)= \frac{(n+1)^{n+1}}{(n!)^2},
\label{conj:mahler2}
\end{equation} 
where $S_n$ is a simplex with centroid at the origin. This is only
known to be true in the plane \cite{Mah39}.

Combining the upper bound in \eqref{eq:minkowski}  with the conjectured
lower bound \eqref{conj:mahler} leads for $K\in\Ksn$ to the inequality 
\begin{equation}
   \vol(K) \geq \frac{2^n}{n!}\prod_{i=1}^n \lambda_i(\dual{K}). 
\label{conj:mahler_2} 
\end{equation}
 This inequality, which would be best possible, for instance, for the
 cross-polytope  $\dual{C_n}$, was also conjectured by  Mahler \cite{Mah74}, and the
 previous mentioned results on the volume product $M(K)$ implies that it is
 true for $n=2$ and (probably for $n=3$). Even the weaker
 inequality, 
\begin{equation}
   \vol(K) \geq \frac{2^n}{n!} \lambda_1(\dual{K})^n,  
\label{conj:mahler_2_1} 
\end{equation}
which has also been studied by Mahler,  is open for $n\geq 4$. 

For not necessarily symmetric bodies the same problem was studied by
Makai Jr., and he conjectured for $K\in \Kn$ 
\begin{equation}
   \vol(K) \geq \frac{n+1}{n!} \lambda_1(\dual{\cs(K)})^n,  
\label{conj:makai} 
\end{equation}
and proved it for $n=2$ (\cite{FeM74, Mak78}). In view of \eqref{conj:mahler_2_1}, one might
conjecture  the stronger inequality 
\begin{equation}
   \vol(K) \geq \frac{n+1}{n!} \prod_{i=1}^n\lambda_i(\dual{\cs(K)})^n,  
\label{conj:makai_2} 
\end{equation}
which would be possible as the simplex
$S_n=\conv\{\ve_1,\dots,\ve_n,-\vone\}$ shows, where $\ve_i$ is the
$i$th   unit vector and $\vone$ is the all
1-vector. For $n=2$ this is an immediate consequence of the upper bound in
\eqref{eq:minkowski} and Eggelston \cite{Eg61} inequality for planar convex
bodies  
\begin{equation}
  \vol(K)\vol(\dual{\cs(K)})\geq 6.
  \label{eq:eggelston} 
\end{equation}
Actually, we believe that taking into account all successive minima one should get
a stronger lower bound as in \eqref{conj:makai_2}, and here we show in
the plane.
\begin{theo} Let $K\in\mathcal{K}^2$. Then 
\begin{equation}
\begin{split}
        \vol(K) & \geq \frac{3}{2} \lambda_1(\dual{\cs(K)})
        \lambda_2(\dual{\cs(K)})\\
        & +\frac{1}{2}\lambda_1(\dual{\cs(K)})\Big(\lambda_2(\dual{\cs(K)})-\lambda_1(\dual{\cs(K)})\Big),
\end{split}
%
\label{eq:mainineq}
\end{equation}  
and  equality holds if and only if $K$ is up to translations and
unimodular transformations equal to the triangle $T_{s,t}=\conv\{(-s,t-s), (s,t),
(0,t)\}$ with $t\geq s\in\R_{> 0}$.
\label{thm:planar}
\end{theo}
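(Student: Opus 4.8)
The plan is to combine the two sharp planar inequalities at our disposal — Eggleston's inequality \eqref{eq:eggelston}, whose equality case is the triangle, and the second theorem of Minkowski \eqref{eq:minkowski} applied directly to the symmetric body $\dual{\cs(K)}$, whose equality case is the box — and then to extract the extra summand $\tfrac12\lambda_1(\lambda_2-\lambda_1)$ from the fact that these two equality cases cannot occur together unless $\lambda_1=\lambda_2$.

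Write $\lambda_i=\lambda_i(\dual{\cs(K)})$ and, without loss of generality, $\lambda_1\le\lambda_2$. I would begin by normalizing. In dimension $2$ the successive minima of the symmetric body $\dual{\cs(K)}$ are attained at a basis of $\Z^2$, so after a unimodular transformation (which alters neither $\vol(K)$ nor the $\lambda_i$) we may assume they are attained at $\pm\ve_1$ and $\pm\ve_2$. Since $\dual{\cs(K)}=(\cs(K))^\star$, this says that the support function $h$ of $\cs(K)$ satisfies $h(\ve_1)=\lambda_1$, $h(\ve_2)=\lambda_2$, together with the minimality conditions $h(\vz)\ge\lambda_1$ for all $\vz\in\Z^2\setminus\{\vnull\}$ and $h(\vz)\ge\lambda_2$ for all $\vz\in\Z^2\setminus\Z\ve_1$. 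Because $h(\ve_i)$ is half the width of $K$ in direction $\ve_i$, a translation places $K$ in the slab $\{0\le x_1\le 2\lambda_1\}$, touching both of its bounding lines.

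For the main term we simply note that Eggleston's inequality gives $\vol(K)\ge 6/\vol(\dual{\cs(K)})$ with equality only for triangles, while Minkowski's second theorem, read for $\dual{\cs(K)}$, gives $\vol(\dual{\cs(K)})\le 4/(\lambda_1\lambda_2)$ with equality only when $\dual{\cs(K)}$ is a parallelogram with edge directions $\ve_1,\ve_2$; together these yield $\vol(K)\ge\tfrac32\lambda_1\lambda_2$, which is \eqref{conj:makai_2} in the plane. For the improvement, observe that the Minkowski bound is never reached by the polar of a centrally symmetric hexagon, hence never when $K$ is a triangle, whereas Eggleston's bound is never reached unless $K$ is a triangle; since the extremizers are triangles, there is a genuine loss. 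To make this precise I would show by a standard compactness and shadow-system (vertex-moving) argument that the infimum of $\vol(K)$ subject to $\lambda_i(\dual{\cs(K)})=\lambda_i$ is attained at a triangle — a body that is not a triangle, e.g. a parallelogram (for which $\vol(K)\vol(\dual{\cs(K)})=8>6$), can be deformed to decrease $\vol(K)$ while preserving $\lambda_1,\lambda_2$. It then remains to minimize $\vol(\Delta)$ over triangles $\Delta$ in the slab $\{0\le x_1\le 2\lambda_1\}$ subject to $w_{\ve_2}(\Delta)=2\lambda_2$, $w_{\vz}(\Delta)\ge 2\lambda_1$ for all primitive $\vz\in\Z^2$, and $w_{\vz}(\Delta)\ge 2\lambda_2$ for primitive $\vz\notin\Z\ve_1$; writing the three vertices in coordinates turns this into a finite-dimensional problem whose minimum equals $2\lambda_1\lambda_2-\tfrac12\lambda_1^2=\tfrac32\lambda_1\lambda_2+\tfrac12\lambda_1(\lambda_2-\lambda_1)$, attained, up to translation and unimodular transformation, by $T_{s,t}$.

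The equality characterization then follows by reversing the chain: equality forces Eggleston to be tight, hence $K$ to be a triangle, hence the finite optimization above to be solved exactly, which — once one checks that only a bounded number of the lattice-width constraints can be simultaneously active — determines $K$ up to translation and unimodular transformation as $T_{s,t}$, and the converse is a direct verification. The hardest part is the third paragraph: the reduction to the triangle case must be carried out without destroying the sharp equality information, and the ensuing finite minimization requires disentangling the several ways a triangle can sit inside the slab and controlling the a priori infinite family of lattice-width constraints so as to produce both the sharp value and the exact extremal family.
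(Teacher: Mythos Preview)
Your overall strategy --- compactness to get a minimizer, reduce to triangles, then solve a finite problem --- is the same as the paper's, but the proposal contains a genuine gap at the step you yourself flag as ``the hardest part.'' The claim that a non-triangle minimizer ``can be deformed to decrease $\vol(K)$ while preserving $\lambda_1,\lambda_2$'' via a ``standard shadow-system (vertex-moving) argument'' is not substantiated, and it is not standard. The constraints $\lambda_i(\dual{\cs(K)})=\lambda_i$ amount to infinitely many lattice-width conditions $\suk_K(\vz)+\suk_K(-\vz)\ge 2\lambda_1$ (resp.\ $2\lambda_2$) together with equalities at $\pm\ve_1,\pm\ve_2$; sliding a vertex of $K$ along a line changes widths in all directions but one, so it is not at all clear that any such motion simultaneously preserves the equalities, respects the inequalities, and lowers the area. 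Your parenthetical about parallelograms only shows they are not minimizers; it does not produce a deformation argument for arbitrary non-triangles.

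The paper carries out exactly this reduction, and it is the bulk of the proof. It works dually: minimizers are first shown to be polygons whose polar $\dual{K}$ has on each edge, in its relative interior, a point of the form $\vz/\bnorm{\vz}_{\dual{K}}$ with $\vz\in\{\pm\ve_1\}$ or $\bnorm{\vz}_{\dual{\cs(K)}}=1$. A lattice argument bounds the number of such critical points by six, and then one \emph{rotates an edge of $\dual{K}$} around its single critical point (rather than moving vertices of $K$) to force either a second critical point onto that edge or a critical point onto a vertex; this is what drives the vertex count of $K$ down to at most four. Quadrilaterals are excluded by an explicit computation, and the remaining triangle case is handled not by leaving it as ``a finite-dimensional problem'' but by a careful case split on how the six critical points distribute among the three edges of $\dual{K}$, followed by an explicit parametrization that yields the value $2\lambda_1\lambda_2-\tfrac12\lambda_1^2$ via the AM--GM inequality. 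So your outline is on the right track, but the actual mechanism --- working on the polar side, identifying and controlling the critical lattice points, and rotating edges of $\dual{K}$ --- is missing, and without it the reduction to triangles does not go through.
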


We note that for the triangle $T_{s,t}$ (see Figure \ref{fig:trianglest}) we have
\begin{equation}
  \label{eq:1}
  \lambda_{1}(\dual{\cs(K)})=s, \,\lambda_{2}(\dual{\cs(K)})=t,\,
   \quad\text{ and } \vol(T_{s,t})=2ts-\frac{1}{2}s^2. 
\end{equation}

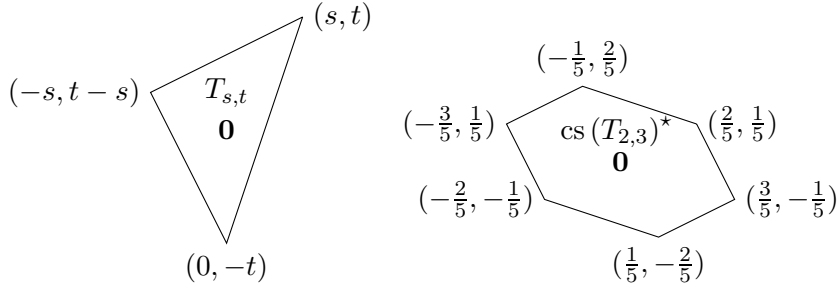
\begin{figure}[htb]
\begin{center}
	\begin{tikzpicture}[scale=0.5]
	\draw (2,3) -- (-2,1) -- (0,-3) -- (2,3);
	\draw (2,3) node[right] {$(s,t)$};
	\draw (-2,1) node[left] {$(-s,t-s)$};
	\draw (0,-3) node[below] {$(0,-t)$};
	\draw (0,0) node {$\vnull$};
        \draw (0,1) node {$T_{s,t}$};
	\end{tikzpicture}
	\begin{tikzpicture}[scale=0.5]
	\draw (2,1) -- (-1,2) -- (-3,1) -- (-2,-1) -- (1,-2) -- (3,-1) -- (2,1);
	\draw (2,1) node[right] {$(\frac{2}{5},\frac{1}{5})$};
	\draw (-1,2) node[above] {$(-\frac{1}{5},\frac{2}{5})$};
	\draw (-3,1) node[left] {$(-\frac{3}{5},\frac{1}{5})$};
	\draw (-2,-1) node[left] {$(-\frac{2}{5},-\frac{1}{5})$};
	\draw (1,-2) node[below] {$(\frac{1}{5},-\frac{2}{5})$};
	\draw (3,-1) node[right] {$(\frac{3}{5},-\frac{1}{5})$};
	\draw (-0.1,0.8) node {$\dual{\cs(T_{2,3})}$};
        \draw (0,0) node {$\vnull$};
	\end{tikzpicture}
\end{center}
\caption{Triangle $T_{s,t}$ attaining equality in Theorem
  \ref{thm:planar}, and $\dual{\cs(T_{2,3})}$ for the
  parameters $s=2, t=3$.}
\label{fig:trianglest}
\end{figure} 


We remark that Makai\&Martini\cite[Proposition 3.1]{MM16} (see also
Makai\cite[Proposition 1]{Mak78})
verified for simplices  $S\in\Kn$ the
conjectured higher-dimensional analogue of \eqref{eq:eggelston},
namely
\begin{equation*}
   \vol(S)\vol(\dual{\cs(S)})\geq 2^n\frac{n+1}{n!}.
\end{equation*}
Application of Minkowski's upper bound \eqref{eq:minkowski} shows inequality
\eqref{conj:makai_2} for simplices.  For arbitrary convex
bodies $K\in\Kn$  one may write (cf.~\cite{MM16})
\begin{equation*}
   \vol(K)\vol(\dual{\cs(K)})=\frac{\vol(K)}{\vol(\cs(K))} M(\cs(K))
   \geq \frac{2^n}{\binom{2n}{n}}\frac{\pi^n}{n!} \geq \frac{(\pi/2)^n}{n!},
\end{equation*}
where the lower bound on the  volume product is Kuperberg's
bound\cite{Ku08}, and the lower bound on  the ratio
$\frac{\vol(K)}{\vol(\cs(K))}$ is the Rogers-Shephard bound (cf.,
e.g., \cite[Theorem 10.4.1]{Schneider:2014}. Hence, in
general, we have the bound 
\begin{equation}
   \vol(K) \geq \frac{(\pi/4)^n}{n!} \prod_{i=1}^n\lambda_i(\dual{\cs(K)})^n.  
\end{equation}
In contrast to the lower bounds, in the case of upper bounds we have a
complete picture.

\begin{theo} Let $K\in\Kn$. 
\begin{enumerate} 
\item  Then 
\begin{equation}
  \vol(K)\leq 2^n \prod_{i=1}^n \lambda_i(\dual{\cs(K)}).
\label{eq:uppersym}
\end{equation} 
The inequality is best possible.  
\item If the centroid of $K$ is at the origin, then 
\begin{equation}
  \vol(K)\leq \frac{(n+1)^n}{n!} \prod_{i=1}^n \lambda_i(\dual{K}).
\label{eq:uppernotsym}
\end{equation} 
The inequality is best possible.
\item  For arbitrary $K\in\Knull$, the volume is in general not  
  bounded from above by the product of $\lambda_i(\dual{K})$.
\end{enumerate} 
\label{thm:upperbounds} 
\end{theo}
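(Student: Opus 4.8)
The plan is to handle the three parts separately, with (ii) carrying the weight.

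For (i) I would combine the (deep) upper bound in Minkowski's second theorem, i.e.\ the right-hand inequality of \eqref{eq:minkowski}, with the elementary duality $\lambda_i(C)\lambda_{n+1-i}(\dual C)\ge 1$ valid for every $C\in\Ksn$. The latter follows by choosing linearly independent lattice points $\vz_1,\dots,\vz_n$ with $\vz_j\in\lambda_j(C)C$ and $\vw_1,\dots,\vw_n$ with $\vw_j\in\lambda_j(\dual C)\dual C$: some $\vz_j$ with $j\le i$ fails to be orthogonal to $\lin\{\vw_1,\dots,\vw_{n+1-i}\}$, so $|\langle\vz_j,\vw_k\rangle|\ge 1$ for some $k\le n+1-i$, while $\langle\vz_j,\vw_k\rangle\le\lambda_j(C)\lambda_k(\dual C)\le\lambda_i(C)\lambda_{n+1-i}(\dual C)$ since $\vz_j/\lambda_j(C)\in C$ and $\vw_k/\lambda_k(\dual C)\in\dual C$. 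Applying this with $C=\cs(K)$ and reindexing gives
\[
\vol(K)\le 2^n\prod_{i=1}^n\frac1{\lambda_i(\cs(K))}=2^n\prod_{i=1}^n\frac1{\lambda_{n+1-i}(\cs(K))}\le 2^n\prod_{i=1}^n\lambda_i(\dual{\cs(K)}),
\]
with equality for the cube $C_n$ (there $\cs(C_n)=C_n$ and $\lambda_i(C_n)=\lambda_i(\dual{C_n})=1$).

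For (iii) I would exhibit a degenerating family: for $0<\varepsilon<1$ put $K_\varepsilon=\conv\{-\varepsilon\ve_1,\ve_1,\pm\ve_2,\dots,\pm\ve_n\}\in\Knull$. Then $\dual{K_\varepsilon}=[-1/\varepsilon,1]\times[-1,1]^{n-1}$, whose only nonzero lattice points at scale $<1$ are multiples of $\ve_1$, so $\lambda_1(\dual{K_\varepsilon})=\varepsilon$ and $\lambda_2(\dual{K_\varepsilon})=\dots=\lambda_n(\dual{K_\varepsilon})=1$; while $K_\varepsilon$ is a bipyramid over the $(n-1)$-dimensional cross-polytope, so $\vol(K_\varepsilon)=\frac{2^{n-1}}{n!}(1+\varepsilon)$. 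Hence $\vol(K_\varepsilon)/\prod_i\lambda_i(\dual{K_\varepsilon})=\frac{2^{n-1}}{n!}\frac{1+\varepsilon}{\varepsilon}\to\infty$ as $\varepsilon\to0$, so no upper bound of the asserted type can hold for general $K\in\Knull$.

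Part (ii) is the heart. With $\lambda_i=\lambda_i(\dual K)$ and $\vz_1,\dots,\vz_n$ linearly independent lattice points realizing these minima, $\vz_i/\lambda_i\in\dual K$ forces $K\subset\{\vx:\langle\vx,\vz_i\rangle\le\lambda_i\ \forall i\}$, a simplicial cone. The linear map $\vx\mapsto(\langle\vx,\vz_1\rangle,\dots,\langle\vx,\vz_n\rangle)$ has integer matrix of determinant $\pm d$, $d=[\Z^n:\sum_i\Z\vz_i]\ge1$, fixes the centroid at $\vnull$ and scales volume by $d$; composing with $\vx\mapsto(\lambda_i-x_i)_i$ maps $K$ onto a convex body $M\subset\R_{\ge 0}^n$ with centroid $\vc=(\lambda_1,\dots,\lambda_n)$ and $\vol(M)=d\,\vol(K)$. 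So, using $d\ge1$, (ii) reduces to: a convex body $M\subset\R_{\ge 0}^n$ with centroid $\vc$ satisfies $\vol(M)\le\frac{(n+1)^n}{n!}\prod_ic_i=\vol(\Sigma)$, where $\Sigma=\conv\{\vnull,(n+1)c_1\ve_1,\dots,(n+1)c_n\ve_n\}$ also has centroid $\vc$. I expect this orthant inequality to be the main obstacle: the one-dimensional consequences of the centroid condition only confine $M$ to the box $\prod_i[0,(n+1)c_i]$ and lose the factor $n!$, so one must use convexity in all coordinates simultaneously. I would slice $M$ by the level sets of the single functional $\ell(\vx)=\sum_i x_i/((n+1)c_i)$: since $M\subset\R_{\ge 0}^n$, the section $M\cap\{\ell=u\}$ lies inside the $(n-1)$-simplex $\{\vx\ge\vnull:\ell(\vx)=u\}$, whose content is $\propto u^{n-1}$, so the section-content function $a(u)$ obeys $a(u)\le Wu^{n-1}$ with $\int_0^1Wu^{n-1}\,du=\vol(\Sigma)$, while the centroid condition reads $\int u\,a(u)\,du=\frac{n}{n+1}\int a(u)\,du$. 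A bathtub/rearrangement argument then shows $\int a$ is maximized by $a(u)=Wu^{n-1}\mathbf{1}_{[0,1]}(u)$, i.e.\ $\vol(M)\le W/n=\vol(\Sigma)$; pushing the equality discussion back through the reduction (it forces $d=1$, the $\vz_i$ a lattice basis, and $M=\Sigma$) identifies the extremal bodies, a concrete witness being $K=\dual{S_n}$, for which $\vol(\dual{S_n})=\frac{(n+1)^n}{n!}$ and $\lambda_i(\dual{\dual{S_n}})=\lambda_i(S_n)=1$.
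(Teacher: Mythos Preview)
Your proof is correct in all three parts, but each takes a route different from the paper's.

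For (i), the paper avoids Minkowski's second theorem entirely: it picks $\vz_i\in\lambda_i(\dual{\cs(K)})\,\dual{\cs(K)}\cap\Z^n$ and encloses $K$ in the parallelepiped $\{-\suk_K(-\vz_i)\le\ip{\vz_i}{\vx}\le\suk_K(\vz_i)\}$, whose volume is $\prod_i 2\lambda_i(\dual{\cs(K)})/|\det(\vz_1,\dots,\vz_n)|$ because $\suk_K(\vz_i)+\suk_K(-\vz_i)=2\lambda_i(\dual{\cs(K)})$. Your route through \eqref{eq:minkowski} together with Mahler's transference $\lambda_i(C)\lambda_{n+1-i}(\dual C)\ge 1$ is equally valid but invokes the (nontrivial) upper bound in Minkowski's theorem, so it trades directness for the pleasure of seeing the two classical inequalities combine.

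For (ii), the paper quotes Gr\"unbaum's halfspace inequality as a black box: with $H^+=\{\vx:\ip{\sum_i\vz_i/\lambda_i}{\vx}\ge 0\}$ one has $K\cap H^+$ contained in an explicit simplex $S$ of volume $\tfrac{n^n}{n!}\prod_i\lambda_i/|\det(\vz_i)|$, and then $\vol(K)\le\big(\tfrac{n+1}{n}\big)^n\vol(K\cap H^+)\le\big(\tfrac{n+1}{n}\big)^n\vol(S)$. After your change of variables this is exactly the statement $\vol(M)\le\vol(\Sigma)$, so the two proofs establish the same orthant inequality by different means. Your slicing argument is more self-contained and, notably, does not need Brunn--Minkowski or even convexity of $M$: the one step you leave as ``bathtub/rearrangement'' can be done in one line by integrating $(1-u)a(u)$, since
\[
\frac{1}{n+1}\int a=\int(1-u)\,a\le\int_0^1(1-u)\,a\le W\int_0^1(1-u)u^{n-1}\,du=\frac{W}{n(n+1)}.
\]
This is the only place your write-up is thin, so you may want to include this computation explicitly.

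For (iii), the paper lets the simplices $T(s)=\{\ip{\ve_i}{\vx}\le 1,\ \ip{\vone}{\vx}\ge -s\}$ blow up while $\lambda_i(\dual{T(s)})\equiv 1$; your shrinking bipyramids $K_\varepsilon$ achieve the same effect. Both families work.
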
 
Observe, that $\lambda_i(\dual{K})\leq
\lambda_i(\dual{\cs(K)})$, $1\leq i\leq n$, cf.~Proposition \ref{prop:succ}.

Finally, we would like to mention that a weaker inequality
than \eqref{conj:makai} was recently
studied by Alavarez et al.\cite{ABT16}. They conjecture for $K\in\Knull$ 
\begin{equation}
   \vol(K) \geq \frac{n+1}{n!} \lambda_1(\dual{K})^n
\label{conj:alvarez} 
\end{equation} 
with equality if and only if $K$ is a simplex whose vertices are the
only non-trivial lattice points. By the discussion above we know that it is true in the plane, for simplices
and with $(\pi/4)^n/n!$ instead of $(n+1)^n/n!$
(cf.\cite[Theorem II]{ABT16}). Moreover, according to Theorem
\ref{thm:upperbounds} iii), there is no upper bound  on the volume of this
type.
For an optimal lower bound on the volume of centered convex body $K$, i.e., the
centroid if $K$ is at the origin,  in terms of $\lambda_i(K)$ we refer
to \cite{HHC16}.  Instead of extending  Makai's conjecture
\eqref{conj:makai} via higher successive minima
(cf. \eqref{conj:makai}), Gonzales Merion \& Schymura \cite{GSch17}
studied possible extensions via the so called covering minima.


The paper is organized as follows: First, i.e., in Section 2, we will
verify the upper bounds of Theorem \ref{thm:upperbounds}. Then as
preparation for the proof of Theorem \ref{thm:planar} we will study
gauge functions in Section 3.  Finally, 
the content of Section 4 is the proof of Theorem \ref{thm:planar}

For a general background and information on Convex Geometry and
Geometry of Numbers we refer to the books \cite{Gr07, GrL87, Schneider:2014}.

\section{Proof of Theorem \ref{thm:upperbounds}} 
In order to deal with the polar of a convex body $L\in\Kn$, say, it is
convenient to look at its  support function $\suk_L:\R^n\to \R$ given
by 
\begin{equation*}
  \suk_L(\vu)=\max\{\ip{\vu}{\vx} :\vx\in L\} 
\end{equation*}
for $\vu\in\R^n$. Then for $\lambda\in\R_{\geq 0}$ we have 
\begin{equation} 
  \vy\in\lambda\,\dual{L} \text{ if and only if
  }\suk_L(\vy)\leq\lambda.
\label{eq:polarsupp}
\end{equation}
First we observe a simple relation between the successive minim of
$K\in\Knull$ and $\cs(K)$ which, for $i=1$ was already  pointed out by
 by Alvarez et al. \cite{}.   

\begin{prop}  Let $K\in\Knull$. Then, for $1\leq i\leq n$,  
  \begin{equation*}
          \lambda_i(\dual{K})\leq \lambda_i(\dual{\cs(K)}).
  \end{equation*}
\label{prop:succ}
\end{prop}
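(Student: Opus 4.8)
The plan is to reduce the statement to the support-function description of polars recorded in \eqref{eq:polarsupp}, and then to exploit that the support function of $\cs(K)$ is the symmetrization of that of $K$. First I would fix $\lambda=\lambda_i(\dual{\cs(K)})$. Since $\cs(K)\in\Ksn$, its polar $\dual{\cs(K)}$ is a (compact) $o$-symmetric convex body, so the minimum defining $\lambda_i(\dual{\cs(K)})$ is attained: there are linearly independent lattice points $\vy_1,\dots,\vy_i\in\Z^n$ with $\vy_j\in\lambda\,\dual{\cs(K)}$, i.e.\ $\suk_{\cs(K)}(\vy_j)\leq\lambda$ for $1\leq j\leq i$. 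The goal is then to produce $i$ linearly independent lattice points lying in $\lambda\,\dual{K}$, which by definition yields $\lambda_i(\dual{K})\leq\lambda$.

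The key step is the identity that, since $\cs(K)=\tfrac12\bigl(K+(-K)\bigr)$, support functions behave additively:
\begin{equation*}
  \suk_{\cs(K)}(\vu)=\tfrac12\bigl(\suk_K(\vu)+\suk_K(-\vu)\bigr)\qquad\text{for all }\vu\in\R^n,
\end{equation*}
using $\suk_{-K}(\vu)=\suk_K(-\vu)$. In particular $\suk_{\cs(K)}(\vu)\geq\min\{\suk_K(\vu),\suk_K(-\vu)\}$. Applying this to each $\vy_j$, for every $j$ there is a sign $\varepsilon_j\in\{-1,+1\}$ with $\suk_K(\varepsilon_j\vy_j)\leq\suk_{\cs(K)}(\vy_j)\leq\lambda$, hence $\varepsilon_j\vy_j\in\lambda\,\dual{K}$ by \eqref{eq:polarsupp}.

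Finally I would observe that $\varepsilon_1\vy_1,\dots,\varepsilon_i\vy_i$ are again lattice points and still linearly independent, since multiplying vectors by nonzero scalars affects neither lattice membership nor linear independence. Therefore $\dim\bigl(\lambda\,\dual{K}\cap\Z^n\bigr)\geq i$, and so $\lambda_i(\dual{K})\leq\lambda=\lambda_i(\dual{\cs(K)})$, as claimed.

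There is essentially no analytic obstacle here; the argument is short. The only points that require care are that $\dual{K}$ and $\dual{\cs(K)}$ indeed lie in $\Knull$ (so that the successive minima are well defined and the relevant minimum is attained), and that the passage from $\vy_j$ to $\varepsilon_j\vy_j$ preserves both lattice membership and linear independence. The conceptual heart of the proof is simply the observation that the symmetrized support data $\suk_{\cs(K)}$ dominates the smaller of the two one-sided values $\suk_K(\pm\vy_j)$, which lets one ``fold'' each admissible point for $\dual{\cs(K)}$ into an admissible point for $\dual{K}$.
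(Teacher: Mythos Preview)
Your proof is correct and follows essentially the same route as the paper's own argument: pick linearly independent lattice points realizing $\lambda_i(\dual{\cs(K)})$, use the identity $\suk_{\cs(K)}(\vu)=\tfrac12(\suk_K(\vu)+\suk_K(-\vu))\geq\min\{\suk_K(\vu),\suk_K(-\vu)\}$, and replace each $\vy_j$ by the appropriate sign so that it lies in $\lambda\,\dual{K}$. The only cosmetic difference is that the paper places each $\vz_j$ in $\lambda_j(\dual{\cs(K)})\,\dual{\cs(K)}$ rather than in the common dilate $\lambda\,\dual{\cs(K)}$, which is immaterial.
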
 
\begin{proof} 
Let $\dual{\lambda_i}= \lambda_i(\dual{\cs(K)})$ and
  let  $\vz_1,\dots,\vz_i\in\Z^n$ be
  linearly independent lattice points with 
  $\vz_j\in\dual{\lambda_j}\,\dual{\cs(K)}$, $1\leq j\leq i$.
  Then, for $1\leq j\leq i$,  by the linearity
  of the support function 
  \begin{equation*}
    \dual{\lambda_i}\geq \suk_{\frac{1}{2}(K-K)}(\vz_j) =
    \frac{1}{2}\left(\suk_K(\vz_j)+\suk_K(-\vz_j)\right)
    \geq\min\{\suk_K(\vz_j), \suk_K(-\vz_j)\}.
  \end{equation*}
Hence, either $\vz_j$ or $-\vz_j$ belongs to $\dual{\lambda_i}\dual{K}$
for $1\leq j\leq i$, and thus $\lambda_i(\dual{K})\leq \dual{\lambda_i}=\lambda_i(\dual{\cs(K)})$. 
\end{proof} 

For the proof of Theorem \ref{thm:upperbounds} ii) we will also need a
classical result of Grünbaum \cite{Gruenbaum:1960}, saying that for $K\in\Knull$ and for any
halfspace $H^+=\{\vx\in\R^n : \ip{\va}{\vx}\geq 0\}$ containing the centroid of $K$ it
holds 
\begin{equation}
  \label{eq:gruenbaum}
    \vol(K\cap H^+)\geq \left(\frac{n+1}{n}\right)^n\,\vol(K).
\end{equation}

\begin{proof}[Proof of Theorem \ref{thm:upperbounds}]

For i),  let $\vz_1,\dots,\vz_n\in\Z^n$ be
  linearly independent lattice points with 
  $\vz_i\in \lambda_i(\dual{\cs(K)})\,\dual{\cs(K)}$, $1\leq i\leq n$. Then we
  certainly have 
  \begin{equation*}
     K\subseteq P= \{\vx\in\R^n: -\suk_K(-\vz_i)\leq  \ip{\vz_i}{\vx}\leq\suk_K(\vz_i),\,1\leq
     i\leq n\}. 
  \end{equation*}
In order to estimate the volume of the parallelepiped on the right
hand side we observe, that in view of \eqref{eq:polarsupp},
$2\,\lambda_i(\dual{\cs(K)})=\suk_K(\vz_i)+\suk_K(-\vz_i)$, $1\leq
i\leq n$, and thus 
\begin{equation*}
\vol(K) \leq \vol(P)=
  \frac{1}{|\det(\vz_1,\ldots,\vz_n)|}\prod_{i=1}^n
  2\,\lambda_i(\dual{\cs(K)}) \leq  2^n \prod_{i=1}^n \lambda_i(\dual{\cs(K)}), 
\end{equation*}
where in the last inequality we used $\det(\vz_1,\ldots,\vz_n)\in\Z\setminus\{0\}$.
The cube $C_n$ with its polar body
$\dual{C_n}=\conv\{\pm\ve_1,\dots,\pm\ve_n\}$ shows that the equality is
best possible.

Now assume that the centroid of $K$ is at the origin. Let $\dual{\lambda_i}= \lambda_i(\dual{K})$,
  $1\leq i\leq n$, and let $\vz_1,\dots,\vz_n\in\Z^n$ be
  linearly independent lattice points with 
  $\vz_i\in \dual{\lambda_i}\,\dual{K}$. Then, for $1\leq i\leq n$, 
\begin{equation} 
  \suk_K(\vz_i)\leq  \dual{\lambda_i}.
  \label{eq:succdual}
\end{equation} 
Moreover, we consider the halfspace  
\begin{equation*}
  H^+=\{\vx\in\R^n: \ip{\frac{1}{\dual\lambda_1}\vz_1+\cdots +\frac{1}{\dual\lambda_n}\vz_n}{\vx}\geq 0\}.  
\end{equation*}  
Then we conclude from \eqref{eq:succdual}
\begin{equation}
  K\cap H^+\subseteq S=\{\vx \in \R^n : \ip{\vz_i}{\vx}\leq
  \dual{\lambda_i},\,1\leq i\leq n\}\cap H^+.
  \label{eq:4} 
\end{equation}
In order to calculate the volume of the simplex $S$ we observe that it
is the image of the simplex
\begin{equation*}
  \ov S=\{\vx\in\R^n : \ip{\ve_i}{\vx}\leq 1,\,1\leq i\leq n,\,\ip{\vone}{\vx}\geq 0 \} 
\end{equation*}
with respect to the linear map
$A=(\frac{1}{\dual\lambda_1}\vz_1,\dots,
\frac{1}{\dual\lambda_n}\vz_n)$.  
Hence, 
\begin{equation}
       \vol(S)=\frac{1}{|\det(\vz_1,\ldots,\vz_n)|}\prod_{i=1}^n
       \dual{\lambda_i}\vol(\ov S)=\frac{1}{|\det(\vz_1,\ldots,\vz_n)|}\prod_{i=1}^n
       \dual{\lambda_i}\frac{n^n}{n!},  
     \end{equation}
and together with Grünbaum's bound \eqref{eq:gruenbaum} and
\eqref{eq:4} we conclude
\begin{equation*}
  \begin{split} 
  \vol(K) & \leq \left(\frac{n+1}{n}\right)^n\vol(K\cap H^+)\\  &\leq
  \left(\frac{n+1}{n}\right)^n\vol(S) = 
  \frac{1}{|\det(\vz_1,\ldots,\vz_n)|}\frac{(n+1)^n}{n!} \prod_{i=1}^n
  \dual{\lambda_i}.
  \end{split} 
\end{equation*}
Again, since $\det(\vz_1,\ldots,\vz_n)\in\Z\setminus\{0\}$ we get
the desired bound. 
The simplex $T_n=\{\vx\in\R^n : \ip{\ve_i}{\vx}\leq 1,\,1\leq
i\leq n, \ip{\vone}{\vx}\geq -1\}$ with volume $(n+1)^n/n!$ and
$\dual{T_n}=\conv\{\ve_1,\dots,\ve_n,-\vone\}$ shows the bound is best
possible.

Finally, we point out that the assumption on the centroid  is crucial
for ii). To this end,  for $s\geq 1$ we consider the simplices 
$T(s)=\{\vx\in\R^n : \ip{\ve_i}{\vx}\leq 1,\,1\leq
i\leq n, \ip{\frac{1}{s}\vone}{\vx}\geq -1\}$. Then $\dual{T(s)} = \conv\{-\frac{1}{s}\vone,
\ve_1,\dots,\ve_n\}$ and thus $\lambda_i(\dual{T(s)})=1$, $1\leq i\leq
n$. On the other hand, $\vol(T(s))\to\infty$ as $s$ approaches
$\infty$. This verifies iii).
\end{proof}

\section{Gauge function}
Here we collect some basic facts about gauge functions $\bnorm{\vx}_{K}$ associated to a
$K\in\Knull$ which are defined by 
\begin{equation*}
\bnorm{\vx}_{K}:\R^n\rightarrow[0,\infty)
\end{equation*}
defined by
\begin{equation*}
\bnorm{\vx}_K=\min\left\{t\geq 0 : \vx\in t\,K\right\}.
\end{equation*}
As it is well known, $\bnorm{\cdot}_K$ satisfies the following properties:
\begin{enumerate}
	\item $\bnorm{\vx}_K\geq 0$ with equality if and only if
          $\vx=\vnull$, 
	\item $\bnorm{\lambda\,\vx}_K=\lambda\,\bnorm{\vx}_K$ for
          $\lambda\in\R_{\geq 0}$,
	\item $\bnorm{\vx+\vy}_K\leq \bnorm{\vx}_K+\bnorm{\vy}_K$. 
\end{enumerate}
Conversely, if $\bnorm{\cdot}$ is a function satisfying these three
properties, then its unit ball $B=\{\vx\in\R^n:\bnorm{\vx}\leq 1\}$ 
is a convex body in $\Knull$ and $\bnorm{\cdot}=\bnorm{\cdot}_B$. 

We also note that if $T:\R^n\rightarrow\R^n$ is an invertible linear transformation, then
$\bnorm{\vx}_{T(B)}=\bnorm{T^{-1}\vx}_B$ for all $\vx\in\R^n$. From the
definition of the gauge function it is evident that for $K\in\Knull$ 
\begin{equation}
           \bnorm{\vx}_K=\suk_{\dual{K}}(\vx).
  \label{eq:supportgauge}  
\end{equation}
Hence, from the linearity of the support function we immediately
obtain  
\begin{equation}
\begin{split} 
\bnorm{\vx}_{\dual{\cs(K)}}&=\suk_{\cs(K)}(\vx)\\ &= 
    \frac{1}{2}\left(\suk_{K}(\vx)+\suk_{K}(-\vx)  \right)= 
 \frac{1}{2}\left(\bnorm{\vx}_{\dual{K}}+\bnorm{-\vx}_{\dual{K}}\right).
\end{split} 
\label{eq:cs}  
\end{equation}

Combining this with the triangle inequality we conclude for $K\in\Knull$ 
\begin{equation}
\begin{split} 
	& \bnorm{\vx+\vy}_{\dual{\cs(K)}}=\bnorm{\vx}_{\dual{\cs(K)}}+\bnorm{\vy}_{\dual{\cs(K)}}
       \text{ if and only if } \\ 
	& \bnorm{\vx+\vy}_{\dual{K}}=\bnorm{\vx}_{\dual{K}}+\bnorm{\vy}_{\dual{K}} 
	\text{ and } 
	 \bnorm{-(\vx+\vy)}_{\dual{K}}=\bnorm{-\vx}_{\dual{K}}+\bnorm{-\vy}_{\dual{K}}.
\end{split} 
 \label{eq:equalitygauge}
\end{equation}

\section{Proof of Theorem \ref{thm:planar}}
Since the inequality of Theorem \ref{thm:planar}, i.e.,  
\begin{equation*}
\begin{split} 
\vol(K)&\geq\frac{3}{2} \lambda_1(\dual{\cs(K)})
        \lambda_2(\dual{\cs(K)})\\
        &
        +\frac{1}{2}\lambda_1(\dual{\cs(K)})\Big(\lambda_2(\dual{\cs(K)})-\lambda_1(\dual{\cs(K)})\Big)\\ 
   &= 2 \lambda_1(\dual{\cs(K)})\lambda_2(\dual{\cs(K)})-\frac{1}{2}\lambda_1(\dual{\cs(K)})^2
\end{split} 
\end{equation*} 
is invariant with respect to translations and unimodular
transformations, we may assume  
that $K\in \Knull$, $\lambda_2(\dual{\cs(K)})=1$ and 
the successive minima $\lambda_i(\dual{\cs(K)})$ are obtained in
direction of the unit vectors, i.e., $\ve_i(\dual{\cs(K)})\in
\lambda_i(\dual{\cs(K)}) \dual{\cs(K)}$, $i=1,2$. The latter is due to
the fact that in the plane we can always find
$\vz_i\in\lambda_i(\dual{\cs(K)}) \dual{\cs(K)}\cap\Z^2$ building a basis of
$\Z^2$ \cite[Theorem 4, p.20]{GrL87}.

Hence, for a
fixed $t\geq 1$ we are interested in the minimal volume among all
convex bodies in the set  
\begin{equation*}
\begin{split}
\A(t)=\Big\{K\in\mathcal{K}_{(o)}^2 :\, &
\lambda_1(\dual{\cs(K)})=\frac{1}{t},\lambda_2(\dual{\cs(K)})=1,\\ 
&\ve_i\in\lambda_i(\dual{\cs(K)})\,\dual{\cs(K)}\cap\Z^2,i=1,2\Big\}.
\end{split}
\end{equation*}
Observe, that all bodies in $\A(t)$ are contained in the rectangle 
$[-1/t,1/t]\times [-1,1]$ and since the volume of all these bodies is
lower bounded by $3/2\cdot 1/t$ (cf.~\eqref{conj:makai_2}, which is
true for $n=2$), Blaschke's selection
theorem (cf., e.g., \cite[Theorem 6.3]{Gr07} ensures the existence of a convex bodies  in $\A(t)$
having minimal positive volume. We denote these bodies by $\mathcal{M}(t)$,
i.e., 
\begin{equation*}
   \mathcal{M}(t)=\left\{M\in \A(t): \vol(M)=\min\{\vol(K): K\in\A(t)\}\right\}.  
\end{equation*}
Observe, that due to the triangle (cf.~Theorem \ref{thm:planar}) 
\begin{equation*}
  T_{1,1/t} =\conv\{(-1/t,1-1/t), (1/t,1), (0,-1)\},
\end{equation*}
 we know that for
$K\in \mathcal{M}(t)$ (cf.~\eqref{eq:1}) 
\begin{equation}
    \vol(K)\leq \vol(T_{1,1/t} ) =2\,\frac {1}{t}-\frac{1}{2}\frac{1}{t^2}
\label{eq:upmin}
\end{equation}
and Theorem \ref{thm:planar} claims that this is indeed the minimum.

In the following we will prove different geometric properties of
bodies $S \in \mathcal{M}(t)$
(or better of $\dual{S}$)  and
at the end in Proposition \ref{prop:min} we conclude that
$\mathcal{M}(t)$ contains only ---  up to
translations and unimodular transformations --- the triangle
$T_{1,1/t}$. This proves Theorem \ref{thm:planar}.

Due to the definition of the successive minima,  all the lattice
points  of $\dual{\cs(K)}$ for $K\in\A(t)$ 
are either contained in the boundary of $\dual{\cs(K)}$ or lie on the
line  $\lin\{\ve_1\}$. For such a $K\in \mathcal{A}(t)$ we set   
\begin{equation*}
\begin{split}
     C_o(K) & =\{\vz\in\Z^2 : \bnorm{\vz}_{\dual{\cs(K)}}=1\}\cup\{\pm
     \ve_1\},\\ 
     C(K) & = \Big\{ \vz/\bnorm{\vz}_{\dual{K}}: \vz\in C_o(K)\Big\}.
\end{split} 
\end{equation*}

The points in  $C(K)$ are our main objective by which we will show
geometric properties of bodies in $\mathcal{M}(t)$. 

\begin{prop} Let $K\in \mathcal{M}(t)$. Then $K$ is a polygon and the
    relative interior of each edge of $\dual{K}$ contains a point of
    $C(K)$.  
\label{prop:one}
\end{prop}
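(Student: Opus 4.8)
The plan is to argue by contradiction: if $\dual K$ fails to be a polygon, or has an edge whose relative interior misses $C(K)$, we construct a competitor $K' \in \A(t)$ with $\vol(K') < \vol(K)$, contradicting minimality. First I would record the key compatibility facts. Since $K \in \Knull$ and, by \eqref{eq:supportgauge}, $\bnorm{\cdot}_K = \suk_{\dual K}$, shrinking $K$ (equivalently enlarging $\dual K$) in a way that does not move any point of $C_o(K)$ outside the corresponding dilate of $\dual{\cs(K)}$ preserves membership in $\A(t)$: indeed, $\lambda_i(\dual{\cs(K)})$ is governed, via \eqref{eq:polarsupp} and \eqref{eq:cs}, by the values $\tfrac12(\bnorm{\vz}_{\dual K}+\bnorm{-\vz}_{\dual K})$ on lattice points $\vz$, and the constraints defining $\A(t)$ only see finitely many such points, namely those in $C_o(K)$. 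So the whole game is local to $\dual K$ away from $C(K)$. Here I would also note that each $\vz \in C_o(K)$ gives a point $\vz/\bnorm{\vz}_{\dual K} \in \partial \dual K$ (it lies in $\dual K$ because $\bnorm{\cdot}_{\dual K} = \suk_{\dual{\dual K}} = \suk_K$ evaluated there is $1$ — using $\dual{\dual K}=K$ — and on the boundary by homogeneity), so $C(K) \subset \partial \dual K$ always.

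Next, suppose $\dual K$ is not a polygon, or is a polygon but some edge $E$ has $\mathrm{relint}(E) \cap C(K) = \emptyset$. In either case there is a boundary arc $\Gamma \subset \partial \dual K$, relatively open, containing no point of the finite set $C(K)$, such that $\dual K$ is "strictly convex or non-facial" along $\Gamma$ in the sense that we can push $\partial \dual K$ outward across $\Gamma$ — replacing $\dual K$ by the convex hull of $\dual K$ with a small cap beyond $\Gamma$, or cutting a supporting line into $\dual K$ along a chord near $\Gamma$ — without disturbing any point of $C(K)$. Concretely: if $\dual K$ is not a polygon there is an exposed point $\vp \in \partial\dual K \setminus C(K)$ at which $\partial\dual K$ is $C^1$ (or simply not locally a segment), and a supporting line $\ell$ at $\vp$; the set $\dual K \cap \ell^{+}$ for a slightly translated-inward half-plane $\ell^{+}$ is a convex body strictly containing $\dual K$ in a neighborhood of $\vp$ and equal to it elsewhere, in particular away from $C(K)$. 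Taking polars, $K' = \dual{(\dual K \cap \ell^{+})} \subsetneq K$, so $\vol(K') < \vol(K)$. If instead $\dual K$ is a polygon with an edge $E$ whose relative interior avoids $C(K)$, I would instead enlarge $\dual K$ to $\conv(\dual K \cup \{\vq\})$ for a point $\vq$ just outside $\mathrm{relint}(E)$, again leaving $C(K)$ untouched; this strictly enlarges $\dual K$ near $E$, hence strictly shrinks $K$.

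It remains to check $K' \in \A(t)$, which is the crux. We have $K' \subset K$, so $\lambda_i(\dual{\cs(K')}) \ge \lambda_i(\dual{\cs(K)})$ for both $i$ by monotonicity of successive minima under inclusion (larger body, smaller minima — applied to $\cs(K') \subseteq \cs(K)$, hence $\dual{\cs(K')} \supseteq \dual{\cs(K)}$). For the reverse inequalities it suffices to exhibit lattice points realizing the old minima. By construction the modification of $\dual K$ fixes every point of $C(K)$ and in particular fixes the attained points $\ve_i/\bnorm{\ve_i}_{\dual K}$ and the relevant boundary lattice points $\vz$; more precisely, the values $\bnorm{\vz}_{\dual{K'}}$ and $\bnorm{-\vz}_{\dual{K'}}$ coincide with $\bnorm{\vz}_{\dual K}, \bnorm{-\vz}_{\dual K}$ for $\vz \in C_o(K)$, because near those points $\dual{K'}$ and $\dual K$ have the same support function — equivalently $\dual{K'}$ and $\dual K$ agree in directions exposing the $C(K)$ points. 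Hence by \eqref{eq:cs}, $\bnorm{\ve_i}_{\dual{\cs(K')}} = \bnorm{\ve_i}_{\dual{\cs(K)}} = \lambda_i(\dual{\cs(K)})$, so $\ve_i \in \lambda_i(\dual{\cs(K)})\dual{\cs(K')}$, forcing $\lambda_i(\dual{\cs(K')}) \le \lambda_i(\dual{\cs(K)})$. Combined with the reverse, $\lambda_i(\dual{\cs(K')}) = \lambda_i(\dual{\cs(K)})$ for $i=1,2$, and the $\ve_i$ still attain them, so $K' \in \A(t)$. The main obstacle is this verification step — one must ensure the perturbation of $\dual K$ genuinely leaves \emph{all} lattice points relevant to $\lambda_1,\lambda_2$ in place (not merely the ones in $C(K)$); this is why $C_o(K)$ was defined to include $\pm\ve_1$ and \emph{all} boundary lattice points of $\dual{\cs(K)}$, and one has to invoke the description stated just before the proposition that every lattice point of $\dual{\cs(K)}$ is either on its boundary or on $\lin\{\ve_1\}$, so that choosing the perturbation region disjoint from the finite set $C(K)$ and from $\lin\{\ve_1\}$ (which is harmless since moving $\partial\dual K$ transversally to that line near a non-$C(K)$ point does not create new lattice points inside a smaller dilate) suffices.
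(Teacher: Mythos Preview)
Your strategy matches the paper's---enlarge $\dual K$ away from $C(K)$ to strictly shrink $K$ while remaining in $\A(t)$---and your treatment of the edge case (push the edge outward, equivalently take $\conv(\dual K\cup\{\vq\})$) is essentially what the paper does. But two steps are wrong as written. First, in the non-polygon case, the sentence ``$\dual K\cap\ell^{+}$ \dots\ is a convex body strictly containing $\dual K$'' is impossible: intersecting with a half-plane can never enlarge. The convex-hull variant you mention one line earlier would work; the paper instead bypasses any local perturbation with a global construction, taking $C$ to be the intersection of the supporting half-spaces of $\dual K$ at \emph{all} points $\vz/\bnorm{\vz}_{\dual K}$ for nonzero lattice $\vz$ in a large box $C_N$, together with the box itself. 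Then $C\supseteq\dual K$ is automatically a polygon, $P=\dual C\subseteq K$, and by construction $\bnorm{\vz}_{\dual P}=\bnorm{\vz}_{\dual K}$ for every relevant lattice point at once, with no smallness argument needed.

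Second, and more seriously, your monotonicity is backward. From $K'\subset K$ you correctly deduce $\dual{\cs(K')}\supseteq\dual{\cs(K)}$, but ``larger body, smaller minima'' then gives $\lambda_i(\dual{\cs(K')})\le\lambda_i(\dual{\cs(K)})$, not $\ge$. So the ``reverse'' inequality you go on to prove by exhibiting $\ve_i$ is the \emph{same} direction again, and the direction $\lambda_i(\dual{\cs(K')})\ge\lambda_i(\dual{\cs(K)})$ is nowhere established. That is the substantive part: one must rule out that enlarging $\dual K$ drags some lattice point $\vz\notin C_o(K)$ with $z_2\ne 0$ down to $\bnorm{\vz}_{\dual{\cs(K')}}<1$ (or some $\vz$ below $1/t$). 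Your final paragraph correctly names the mechanism---finiteness of $C_o(K)$, the strict inequality $\bnorm{\vz}_{\dual{\cs(K)}}>1$ for the remaining lattice points off $\lin\{\ve_1\}$, and smallness of the perturbation---but this argument must \emph{replace} the faulty monotonicity claim, not sit beside it as an afterthought.
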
 

\begin{proof}  
First, we prove that $\dual{K}$ and thus $K$ is a polygon. Since
$\dual{\cs(K)}, \dual{K}$ are bounded, both are strictly contained in a square
$C_N=[-N,N]^2$ for some large $N\in\R_{>0}$. 
For any non-zero lattice point $\vz\in C_N$, there is a supporting
hyperplane in the boundary point
$\frac{\vz}{\bnorm{\vz}_{\dual{K}}}$ with respect to $\dual{K}$. 
Let $C$ be the intersection of the corresponding halfspaces 
containing $\dual{K}$ together with the halfspaces bounding $C_N$. 

Obviously, $\dual{C}\subseteq K$ is a
 polygon and we claim that $\dual{C}\in\A(t)$. In order to avoid
 confusion, we set $P=\dual{C}$ and so $C=\dual{P}$ and we want to show
$P\in \A(t)$.  
 To this end, we observe
 that for all  $\vz\in C_N$ we have by construction
 \begin{equation*}
     \bnorm{\vz}_{\dual{P}}= \bnorm{\vz}_{\dual{K}} 
 \end{equation*}
and hence, in view of \eqref{eq:cs} 
 \begin{equation*}
     \bnorm{\vz}_{\dual{\cs(P)}}= \bnorm{\vz}_{\dual{\cs(K)}}. 
 \end{equation*}
For  $\vz\in \Z^2\setminus C_N$ we know by construction 
\begin{equation*}
         \bnorm{\vz}_{\dual{P}}\geq\bnorm{\vz}_{\dual{K}}
\end{equation*}
and so 
\begin{equation*}
     \bnorm{\vz}_{\dual{\cs(P)}}\geq\bnorm{\vz}_{\dual{\cs(K)}}>1. 
 \end{equation*} 
Hence, $P\in\A(t)$, $P\subseteq K$  and since $K\in\mathcal{M}(t)$, we
 must have $K=P$.

Next assume that there is an edge of
$\dual{K}$ which does not contain in its relative
interior a point of $C(K)$. Then  we may move the edge  a bit outward so that for this new polygon
$\dual{K_\epsilon}$, considered as the polar of a polygon
$K_\epsilon$,  it holds
\begin{equation*}
   \bnorm{\vz}_{\dual{K_\epsilon}}= \bnorm{\vz}_{\dual{K}} \text{ and thus }
 \bnorm{\vz}_{\dual{\cs(K_\epsilon)}}= \bnorm{\vz}_{\dual{\cs(K)}}
\end{equation*}
for all $\vz\in C(K)$. For all other lattice points $\vz$ (which are
not contained in $\lin\{\ve_1\}$),  we know
$\bnorm{\vz}_{\dual{\cs(K)}}>1$ and hence, by moving just a little bit
we still have $\bnorm{\vz}_{\dual{\cs(K_\epsilon)}}>1$ for
these points.

Thus $K_\epsilon\in\A(t)$ but $K_\epsilon$ is strictly
contained in $K$, contradicting its minimality with respect to the volume. 
\end{proof} 

In order to give a bound on the size of $C(K)$, $K\in\mathcal{M}(t)$,
we need the next lemma. 

\begin{lemma}
	Let $K\in\mathcal{M}(t)$, and let $(m,n)\in C_0(K)$. Then $n\in\{-1,0,1\}$.
	\label{lem:line}
\end{lemma}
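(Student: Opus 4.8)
The claim is that if $K\in\mathcal{M}(t)$ and $(m,n)\in C_0(K)$, then $n\in\{-1,0,1\}$. Recall that $C_0(K)$ consists of the lattice points on the boundary of $\dual{\cs(K)}$ together with $\pm\ve_1$, so apart from $\pm\ve_1$ every such point $\vz$ satisfies $\bnorm{\vz}_{\dual{\cs(K)}}=1$. Since $\lambda_2(\dual{\cs(K)})=1$, this already tells us that $\dual{\cs(K)}\subseteq\{\vx: |x_2|\le 1\}$ would force $|n|\le 1$ immediately; but that is too much to hope for in general, so the argument must genuinely use the \emph{minimality} of the volume, i.e. the fact that $K\in\mathcal{M}(t)$ rather than merely $K\in\A(t)$.

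The plan is to argue by contradiction: suppose $(m,n)\in C_0(K)$ with $|n|\ge 2$. By Proposition~\ref{prop:one}, $\dual{K}$ is a polygon and every edge of $\dual{K}$ has a point of $C(K)$ in its relative interior; recall $C(K)=\{\vz/\bnorm{\vz}_{\dual K}:\vz\in C_0(K)\}$, so in particular the point $\vw=(m,n)/\bnorm{(m,n)}_{\dual K}$ lies on $\partial\dual K$, and by \eqref{eq:cs} its "symmetrized gauge'' satisfies $\bnorm{(m,n)}_{\dual{\cs(K)}}=1$, i.e. $\tfrac12\big(\suk_K(m,n)+\suk_K(-m,-n)\big)=1$. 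First I would translate everything into statements about $\dual K$ (equivalently, about $K$): a point $(m,n)$ with $|n|\ge 2$ lying on or "essentially on" the boundary of $\dual{\cs(K)}$ means $K$ is very flat in the $e_2$-direction near that point. Then I would build a \emph{strictly smaller} competitor $K'\in\A(t)$ by shaving $\dual K$ near the relevant edge(s)/vertex (or, dually, enlarging $K$ in a controlled way) while keeping all the lattice-point constraints $\bnorm{\vz}_{\dual{\cs(K')}}\ge 1$ and $\bnorm{\ve_1}_{\dual{\cs(K')}}=1/t$, $\bnorm{\ve_2}_{\dual{\cs(K')}}=1$ intact. The existence of such a move should follow because the constraint coming from $(m,n)$ with $|n|\ge 2$ is "in the interior" relative to the $\ve_2$-constraint: having $(m,n)$ on $\partial\dual{\cs(K)}$ with $|n|\ge 2$ forces $\dual{\cs(K)}$ to be pinched, and one can fatten $\dual{\cs(K)}$ slightly (shrinking $\dual K$'s polar, hence $K$) in the second coordinate direction near $\ve_2$ without violating the $(m,n)$-constraint, since at $(m,n)$ one has slack of order $|n|-1\ge 1$ in the $x_2$-scale. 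Concretely, one wants to show: if $|n|\ge2$, the "active'' inequalities defining $\mathcal M(t)$-membership near the edge of $\dual K$ through $\vw$ do not actually pin down that edge, contradicting Proposition~\ref{prop:one}; alternatively, one directly exhibits $K_\varepsilon\subsetneq K$ in $\A(t)$ as in the proof of Proposition~\ref{prop:one}.

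The key steps, in order: (i) record that for $(m,n)\in C_0(K)\setminus\{\pm\ve_1\}$ we have $\bnorm{(m,n)}_{\dual{\cs(K)}}=1$ and hence $(m,n)/\bnorm{(m,n)}_{\dual K}\in\partial\dual K\cap C(K)$ sits in the relative interior of some edge $E$ of $\dual K$; (ii) use that $\ve_2\in\dual{\cs(K)}$ (since $\lambda_2=1$ and the second minimum is attained at $\ve_2$) to locate $\pm\ve_2$ on $\partial\dual{\cs(K)}$, so both $(m,n)$ and $\ve_2$ are boundary points of the \emph{symmetric} body $\dual{\cs(K)}$; (iii) deduce a quantitative "gap": the supporting line of $\dual{\cs(K)}$ at $\ve_2$, together with the point $(m,n)$ with $|n|\ge2$, forces $\dual{\cs(K)}$ to have a vertex or flat piece whose removal leaves all other lattice points (those with $\bnorm{\cdot}_{\dual{\cs(K)}}>1$, plus $\pm\ve_1$, plus $\pm\ve_2$) strictly outside or on the new symmetral; (iv) perform the corresponding modification of $\dual K$ (asymmetric, since $K$ itself need not be symmetric — here one has to be careful to modify $\dual K$ and then check both $\bnorm{\cdot}_{\dual K}$ and $\bnorm{-\cdot}_{\dual K}$, using \eqref{eq:cs} to control $\bnorm{\cdot}_{\dual{\cs(K)}}$) to obtain $K'\in\A(t)$ with $\vol(K')<\vol(K)$, contradicting minimality.

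\textbf{Main obstacle.} The delicate point is step (iv): $K$ is not symmetric, so "shaving $\dual K$'' to reduce the second-coordinate extent of $\dual{\cs(K)}$ is a statement about $\tfrac12(\suk_K(\cdot)+\suk_K(-\cdot))$, and one must split the required decrease between the $\suk_K(m,n)$ and $\suk_K(-m,-n)$ parts while simultaneously ensuring no \emph{other} lattice point's symmetrized gauge drops to $1$ or below — in particular the lattice points on $\lin\{\ve_1\}$, for which only the $\ve_1$-constraint, not an interior-of-edge condition, is available. Handling this likely requires a careful case distinction on whether $(m,n)$ and $\ve_2$ lie on the same edge of $\dual{\cs(K)}$ or on different edges, and on the sign of $m$; I expect the cleanest route is to reduce to showing that no edge of $\dual K$ can have its $C(K)$-witness $\vw=(m,n)/\bnorm{(m,n)}_{\dual K}$ with $|n|\ge 2$, by exhibiting an explicit outward push of that edge (inward shrink of $K$) of size $\varepsilon$ that only affects $\bnorm{\cdot}_{\dual{\cs}}$ of the finitely many lattice points with $|n'|\ge 2$, all of which have strict slack, precisely mirroring the $\varepsilon$-perturbation argument already used in the proof of Proposition~\ref{prop:one}.
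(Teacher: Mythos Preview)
Your plan heads in a much harder direction than necessary, and the core step~(iv) is left as a hope rather than an argument. In fact the lemma is almost entirely a statement about membership in $\A(t)$, not about volume--minimality; the paper's proof uses minimality only in a single degenerate boundary case.

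Here is the paper's argument. Suppose $(m,n)\in C_0(K)$ with $n\ge 2$. From $\lambda_1(\dual{\cs(K)})=1/t$ and the normalization $\ve_1\in\lambda_1\,\dual{\cs(K)}$ one has $\pm t\,\ve_1\in\dual{\cs(K)}$; together with $(m,n)\in\partial\dual{\cs(K)}$ this gives
\[
\conv\{(t,0),\,(-t,0),\,(m,n)\}\subseteq\dual{\cs(K)}.
\]
Intersecting this triangle with the line $\{x_2=1\}$ yields a segment of length $2t(n-1)/n\ge t\ge 1$. If the length is strictly larger than $1$, the open segment contains a lattice point $(k,1)$, and since $0<1<n$ this point lies in the interior of the triangle, hence in $\inte(\dual{\cs(K)})$; thus $\bnorm{(k,1)}_{\dual{\cs(K)}}<1$, contradicting $\lambda_2(\dual{\cs(K)})=1$. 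The only way to avoid strict inequality is $n=2$ and $t=1$, and in that residual case one checks directly that $\dual{\cs(K)}$ is forced to be a specific parallelogram, so $K$ (up to translation) has volume $2>\tfrac32=2-\tfrac12$, contradicting $K\in\mathcal{M}(1)$ via \eqref{eq:upmin}.

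So the key observation you are missing is not a perturbation of $\dual K$, but simply that the large first successive minimum places $\pm t\ve_1$ far out in $\dual{\cs(K)}$, and convexity then traps a lattice point on the line $x_2=1$. Your proposed route---pushing an edge of $\dual K$ and controlling $\bnorm{\cdot}_{\dual K}$ and $\bnorm{-\cdot}_{\dual K}$ separately for all relevant lattice points---may be salvageable, but as written it has a real gap: you never verify that pushing the edge through $(m,n)/\bnorm{(m,n)}_{\dual K}$ does not simultaneously decrease $\bnorm{\pm\ve_2}_{\dual{\cs(K)}}$ below $1$ or $\bnorm{\pm\ve_1}_{\dual{\cs(K)}}$ below $1/t$, and your ``slack of order $|n|-1$'' heuristic does not translate into an inequality about the gauge of $\dual K$ (which is asymmetric). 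The paper's approach sidesteps all of this.
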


\begin{proof}
	Assume that $(m,n)\in C_0(K)$ with $n\geq 2$, which is trivially a primitive lattice point.
	
	Since $(0,t),(0,-t)\in \dual{\cs(K)}$ and $(m,n)\in \dual{\cs(K)}$, the intersection of 
	\begin{equation*}
	\conv\{(0,t),(0,-t),(m,n)\}
	\end{equation*}
	with  the line $\{\vx\in \R^2 : x_2=1\}$  has length greater
        or equal than $t\geq 1$.  If the length is strictly greater
        than 1, this  intersection contains a lattice point
        $\vv\in\Z^2$ with $\bnorm{\vv}_{\dual{\cs(K)}}<1$. 

The only
        remaining case is $n=2$ and $t=1$, and since then $\ve_1,\ve_2$ are in the
        boundary, $\dual{\cs(K)}=\conv\{\pm\ve_1, \pm (1,2)\}$. Hence
        up to translations we can assume that $K$ is the  parallelogram
        $\conv\{\pm\ve_1, \pm (1,-1)\}$ of volume $2$. Hence, $K\notin
        \mathcal{M}(t)$ (cf.\eqref{eq:upmin}).   
\end{proof}

\begin{rem} Let $K\in\mathcal{M}(t)$. By Lemma \ref{lem:line} we get
\begin{enumerate}
\item if $|C(K)|=4$ then $C_0(K)=\{\pm\ve_1,\pm\ve_2\}$, 
\item  if $|C(K)|=6$, then
	\begin{equation*}
	C_0(K)=\{\pm\ve_1,\pm\ve_2,\pm(\ve_1+\ve_2)\}\text{ or }\{\pm\ve_1,\pm\ve_2,\pm(\ve_2-\ve_1)\}. 
      \end{equation*}
      Observe that both configurations are unimodular equivalent.
\end{enumerate} 
\label{rem:six} 
\end{rem} 

Next we show that $C(K)$ can not have more than $6$ points. 

\begin{prop} 
	Let $K\in\mathcal{M}(t)$. Then $|C(K)|\leq 6$, i.e., $|C(K)|=4$ or $6$.
\label{prop:six}
\end{prop}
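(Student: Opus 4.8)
The plan is to turn the desired bound on $|C(K)|$ into a bound on the number of lattice points of $\dual{\cs(K)}$ at height $x_2=\pm1$, and then to exclude too many of these by an area estimate forcing $\vol(K)>\vol(T_{1,1/t})$.

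First I would carry out the counting. By Lemma~\ref{lem:line} every $(m,n)\in C_o(K)$ has $n\in\{-1,0,1\}$. The elements of $C_o(K)$ with $n=0$ are nonzero integer multiples of $\ve_1$, and any such $\vz$ has $\vz/\bnorm{\vz}_{\dual K}\in\{\ve_1/\bnorm{\ve_1}_{\dual K},\,-\ve_1/\bnorm{-\ve_1}_{\dual K}\}$, so this part contributes exactly two points to $C(K)$, both on the first coordinate axis. By central symmetry of $\dual{\cs(K)}$ the number $N$ of elements of $C_o(K)$ with $n=1$ equals the number with $n=-1$, and since a lattice point of $\dual{\cs(K)}$ off $\lin\{\ve_1\}$ necessarily lies on $\partial\dual{\cs(K)}$ (as noted before Proposition~\ref{prop:one}), we have $N=|\dual{\cs(K)}\cap\{x_2=1\}\cap\Z^2|\ge1$ because $\ve_2\in\dual{\cs(K)}$. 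The $N$ images arising from $n=1$ all have positive second coordinate and are pairwise distinct (two integer points $(m,1)$ are positive multiples of one another only if equal), symmetrically for the $N$ images with negative second coordinate, and the two axis points have zero second coordinate; hence all $2N+2$ images are distinct, i.e.\ $|C(K)|=2N+2$. It therefore suffices to prove $N\le2$.

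Suppose $N\ge3$. Then $\dual{\cs(K)}\cap\{x_2=1\}$ contains three consecutive lattice points $(a,1),(a+1,1),(a+2,1)$, all on $\partial\dual{\cs(K)}$. A supporting line of $\dual{\cs(K)}$ at the middle point $(a+1,1)$ must contain the two outer points, which lie on $\{x_2=1\}$ on opposite sides of $(a+1,1)$, so this supporting line is $\{x_2=1\}$; hence $\dual{\cs(K)}\subseteq\{x_2\le1\}$, and taking polars $\ve_2=(0,1)\in\cs(K)=\tfrac12(K-K)$. Consequently $(0,2)\in K-K$, i.e.\ $\vx^+-\vx^-=(0,2)$ for some $\vx^+,\vx^-\in K$; after a translation (changing neither $\vol(K)$ nor $\cs(K)$) we may assume $(0,1),(0,-1)\in K$. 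Moreover, the width of $K$ in direction $\ve_1$ equals that of $\cs(K)=\tfrac12(K-K)$ in direction $\ve_1$, which by symmetry and \eqref{eq:cs} is $2\,\suk_{\cs(K)}(\ve_1)=2\bnorm{\ve_1}_{\dual{\cs(K)}}=2\lambda_1(\dual{\cs(K)})=2/t$, the first minimum being attained at $\ve_1$. Picking $\vu,\vv\in K$ whose first coordinates are the largest value $q$ and the smallest value $-p$ attained on $K$ (so $p,q\ge0$, $p+q=2/t$), the triangles $\conv\{(0,1),(0,-1),\vu\}$ and $\conv\{(0,1),(0,-1),\vv\}$ lie in $K$, are contained in $\{x_1\ge0\}$ and $\{x_1\le0\}$, and have areas exactly $q$ and $p$ (base of length $2$ on the second axis, height the first coordinate of the apex). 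Thus $\vol(K)\ge p+q=2/t>2/t-1/(2t^2)=\vol(T_{1,1/t})$, which by \eqref{eq:upmin} contradicts $K\in\mathcal{M}(t)$. Hence $N\le2$, so $|C(K)|\le6$.

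The step I expect to be the crux is the implication ``$N\ge3\Rightarrow\ve_2\in\cs(K)$'', i.e.\ recognising that three collinear lattice points on $\partial\dual{\cs(K)}$ force $\{x_2=1\}$ to be a supporting line, together with the bookkeeping of the translation that places $K$ in position for the two-triangle area bound. Once this is in place the estimate $\vol(K)\ge2/t$ is essentially automatic, and the only remaining point is the (true) numerical inequality $2/t>2/t-1/(2t^2)$, which is what closes the argument.
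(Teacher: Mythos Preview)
Your proof is correct and follows essentially the same route as the paper's: three boundary lattice points of $\dual{\cs(K)}$ at height $x_2=1$ force $\{x_2=1\}$ to be a supporting line, hence $\ve_2\in\cs(K)$, and then the width $2/t$ of $K$ in direction $\ve_1$ yields $\vol(K)\ge 2/t>\vol(T_{1,1/t})$. Your version is in fact more careful than the paper's sketch --- you make the counting $|C(K)|=2N+2$ explicit and spell out the two-triangle decomposition for the area bound, whereas the paper simply asserts that the edge of $\dual{\cs(K)}$ gives a vertex $\ve_2$ of $\cs(K)$ and that this together with the width forces $\vol(K)\ge 2/t$.
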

\begin{proof}
	Let $K\in\mathcal{M}(t)$ and assume  $|C(K)|>6$. Then in view of Lemma
        \ref{lem:line}  
there are at least three points in $C_0(K)$
with last coordinate $1$, and at least three points with last coordinate
$-1$. All these points lie in the boundary of $\dual{\cs(K)}$ and hence, $\dual{\cs(K)}$ has an edge contained in
the line $\{\vx:x_2=1\}$ and one contained in $\{\vx:x_2=-1\}$. Hence,
$\cs(K)$ has the  vertices $\pm\ve_2$, which shows that $K$ has two
vertices $\vx,\vy$ with $\vx-\vy=2\ve_2$.

On the other hand we have  
$\bnorm{\ve_1}_{\dual{\cs(K)}}=\frac{1}{t}$ and thus
$\suk_{\cs(K)}(\ve_1)=\frac{1}{t}$. Hence,  $K$ contains also two
vertices differing in the first coordinate by $\frac{2}{t}$.
Altogether, this
shows that the volume of $K$ is at least $2/t$ and hence,  $K\notin
        \mathcal{M}(t)$ (cf.\eqref{eq:upmin}).   
	
\end{proof}

Now we study the number of points of $C(K)$ in each edge of
$\dual{K}$. The following lemma shows that, under some translation of
$K$, the relation between the points of $C(K)$ and the edges of $\dual{K}$ does not change.

\begin{lemma}
	Let $K\in\mathcal{K}_{(o)}^2$ and $-\vu\in\inte(K)$. Let
        $\vv\in\R^2$, such that $\frac{\vv}{\bnorm{\vv}_{\dual{K}}}$
        lies in the relative interior of the edge $E=\{\vx\in K:\ip{\vx}{\vf}=1\}$ of
        $\dual{K}$. 

   Then $\frac{\vv}{\bnorm{\vv}_{\dual{(K+\vu)}}}$  lies in the
   relative interior of the edge $E'=\{\vx\in \dual{(K+\vu)} :\ip{\vx}{\vf+\vu}=1\}$ of $\dual{(K+\vu)}$.
	\label{lem:translate}
\end{lemma}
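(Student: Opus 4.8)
The plan is to realize $\dual{(K+\vu)}$ as the image of $\dual{K}$ under an explicit projective transformation that displaces each boundary point only along the ray through the origin, and then to transport edges and their relative interiors through this map. Note first that $-\vu\in\inte(K)$ means $K+\vu\in\Knull$ as well. By \eqref{eq:supportgauge}, applied to $\dual{L}$ in place of $K$, one has $\bnorm{\vy}_{\dual{L}}=\suk_L(\vy)$; since $\suk_{K+\vu}=\suk_K+\ip{\vu}{\cdot}$ this gives the translation dictionary
\[
   \bnorm{\vy}_{\dual{(K+\vu)}}=\bnorm{\vy}_{\dual{K}}+\ip{\vu}{\vy}\qquad(\vy\in\R^2),
\]
and, taking the supremum over $\dual{(K+\vu)}$, $\sup_{\vy\in\dual{(K+\vu)}}\ip{\vu}{\vy}=\bnorm{\vu}_{K+\vu}<1$ because $\vnull\in\inte(K)$. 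Hence $1-\ip{\vu}{\vy}>0$ for all $\vy\in\dual{(K+\vu)}$, while $1+\ip{\vu}{\vx}>0$ for all $\vx\in\dual{K}$ because $-\vu\in\inte(K)$, i.e.\ $\bnorm{-\vu}_K<1$.

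Next I would verify, straight from the definition of the polar, that
\[
   \phi\colon\dual{(K+\vu)}\to\dual{K},\qquad \phi(\vy)=\frac{\vy}{1-\ip{\vu}{\vy}},
\]
is a well-defined bijection with inverse $\phi^{-1}(\vx)=\vx/(1+\ip{\vu}{\vx})$: if $\ip{\vw}{\vy}+\ip{\vu}{\vy}\le 1$ for all $\vw\in K$, dividing by the positive number $1-\ip{\vu}{\vy}$ yields $\phi(\vy)\in\dual{K}$, and conversely; the two formulas compose to the identity. Since $\phi$ and $\phi^{-1}$ rescale each vector along its own ray by a positive factor, they map $\partial\dual{(K+\vu)}$ onto $\partial\dual{K}$ radially; in particular they interchange the radial boundary points in direction $\vv$, so
\[
   \phi^{-1}\!\big(\vv/\bnorm{\vv}_{\dual{K}}\big)=\vv/\bnorm{\vv}_{\dual{(K+\vu)}}.
\]

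The substantive step is that $\phi$ is the restriction to the compact set $\dual{(K+\vu)}$ of the projective transformation $\vy\mapsto\vy/(1-\ip{\vu}{\vy})$ of $\R^2$, whose exceptional line $\{\ip{\vu}{\cdot}=1\}$ is disjoint from $\dual{(K+\vu)}$; therefore $\phi$ is a homeomorphism of the two bodies carrying line segments to line segments, and hence induces an isomorphism of their face lattices --- extreme points to extreme points, edges to edges, and the relative interior of an edge to the relative interior of its image edge. A one-line computation identifies the relevant edges: for $\vx\in\dual{K}$ with $\ip{\vx}{\vf}=1$,
\[
   \ip{\phi^{-1}(\vx)}{\vf+\vu}=\frac{\ip{\vx}{\vf}+\ip{\vx}{\vu}}{1+\ip{\vu}{\vx}}=1,
\]
and symmetrically for $\phi$, so $\phi^{-1}(E)=E'$. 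Consequently, $E$ being an edge of $\dual{K}$ with $\vv/\bnorm{\vv}_{\dual{K}}$ in its relative interior forces $E'=\phi^{-1}(E)$ to be an edge of $\dual{(K+\vu)}$ with $\vv/\bnorm{\vv}_{\dual{(K+\vu)}}=\phi^{-1}\!\big(\vv/\bnorm{\vv}_{\dual{K}}\big)$ in its relative interior, which is the assertion.

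The main obstacle is precisely this last justification --- that the radial map $\phi$ respects the combinatorial structure of the boundary, i.e.\ is an honest homeomorphism of the two convex bodies sending edges to edges and preserving relative interiors --- and this is where both halves of the positivity ($\vnull\in\inte K$ and $-\vu\in\inte K$) are needed, to keep $\phi$ pole-free on $\dual{(K+\vu)}$ and $\phi^{-1}$ pole-free on $\dual{K}$. If one prefers to avoid the language of projective maps, the same argument runs by hand: with $s=\bnorm{\vv}_{\dual{K}}$ and $s'=s+\ip{\vu}{\vv}=\bnorm{\vv}_{\dual{(K+\vu)}}$, check directly that $\vv/s'\in\partial\dual{(K+\vu)}$ lies on the supporting line $\{\ip{\cdot}{\vf+\vu}=1\}$, and that $\vv/s'$ is not an extreme point of $\dual{(K+\vu)}$ by pushing a nondegenerate subsegment of $E$ through $\vv/s$ forward by $\phi^{-1}$; since a non-extreme boundary point of a planar convex body lies in the relative interior of a unique edge, and that edge is contained in the contact segment $E'$ of the above supporting line while $E'$ is itself at most a segment, the two coincide.
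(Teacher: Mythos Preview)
Your proof is correct but takes a genuinely different route from the paper. The paper argues directly via vertex--edge duality: since $E$ is an edge of $\dual{K}$, the point $\vf$ is a vertex of $K$, hence $\vf+\vu$ is a vertex of $K+\vu$, and so $E'$ is an edge of $\dual{(K+\vu)}$. It then computes $\ip{\vv}{\vf+\vu}=\bnorm{\vv}_{\dual{K}}+\ip{\vv}{\vu}=\bnorm{\vv}_{\dual{(K+\vu)}}$ to place $\vv/\bnorm{\vv}_{\dual{(K+\vu)}}$ on $E'$, and observes that were this point a vertex of $\dual{(K+\vu)}$ it would lie on a second edge corresponding to another vertex $\vg+\vu$ of $K+\vu$, forcing $\vv/\bnorm{\vv}_{\dual{K}}$ onto the edge of $\dual{K}$ determined by $\vg$ as well --- contradicting that it sits in the relative interior of $E$. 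You instead build the explicit projective homeomorphism $\phi^{-1}(\vx)=\vx/(1+\ip{\vu}{\vx})$ from $\dual{K}$ onto $\dual{(K+\vu)}$, verify it is radial and pole-free on the relevant bodies (using both $\vnull\in\inte(K)$ and $-\vu\in\inte(K)$), and then transport the whole face lattice at once. Your approach is more structural and yields a global isomorphism between the boundary combinatorics of $\dual{K}$ and $\dual{(K+\vu)}$, which is conceptually satisfying and would generalize cleanly; the paper's argument is more economical, needing only the single vertex $\vf$ and avoiding any discussion of projective maps.
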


\begin{proof}
        By assumption $\vf$ is a vertex of $K$ and so is $\vf+\vu$ a
        vertex of
        $K+\vu$. Hence $E'$ is an edge of $\dual{(K+\vu)}$.    
	Next, since $\ip{\vv}{\vf}=\bnorm{\vv}_{\dual{K}}$ and 
	\begin{equation*}
	\bnorm{\vv}_{\dual{(K+\vu)}}=\suk_{K+\vu}(\vv)=\suk_{K}(\vv)+\ip{\vv}{\vu}=\bnorm{\vv}_{\dual{K}}+\ip{\vv}{\vu},
	\end{equation*}
	we find 
	\begin{equation*}
	\ip{\vv}{\vf+\vu}=\ip{\vv}{\vf}+\ip{\vv}{\vu}=\bnorm{\vv}_{\dual{K}}+\ip{\vv}{\vu}=\bnorm{\vv}_{\dual{(K+\vu)}}.
	\end{equation*}
	Thus $\frac{\vv}{\bnorm{\vv}_{\dual{(K+\vu)}}}\in E'$, and
        since $\vv/\bnorm{\vv}_{\dual{K}}$ was only contained in the
        edge $E$, $\frac{\vv}{\bnorm{\vv}_{\dual{(K+\vu)}}}$ also
        belongs to the relative interior of $E'$.
\end{proof}

Next we describe in more detail the relation of the points of $C(K)$ 
and the edges of $\dual{K}$.

\begin{prop} 
 $\mathcal{M}(t)$ contains a polygon  $K$ such that the relative
 interior  of  each edge of $\dual{K}$ contains  

\begin{equation} 
\begin{split} 
 i) & \text{ at least two points of $C(K)$, or }\\
ii) & \text{ one point of $C(K)$, while }
\frac{\ve_1}{\bnorm{\ve_1}_{\dual{K}}} \text{ or } \frac{-\ve_1}{\bnorm{-\ve_1}_{\dual{K}}}\\ 
& \text{ is a vertex of this edge.}
\end{split} 
\tag{P}
\label{eq:property}
\end{equation} 
Moreover, each $K\in\mathcal{M}(t)$ has at most 4 edges, and  if $K\in
\mathcal{M}(t)$ is a triangle, then $K$ satisfies property \eqref{eq:property}.
	\label{prop:a2}
\end{prop}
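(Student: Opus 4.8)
## Proof Plan for Proposition \ref{prop:a2}

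The plan is to start from an arbitrary $K \in \mathcal{M}(t)$ (which by Proposition \ref{prop:one} is already a polygon each of whose $\dual{K}$-edges has a point of $C(K)$ in its relative interior) and to \emph{translate} $K$ so that the stronger incidence property \eqref{eq:property} becomes visible. The key enabling tool is Lemma \ref{lem:translate}: translating $K$ by $\vu$ does not destroy the fact that $\vv/\bnorm{\vv}_{\dual{K}}$ sits in the relative interior of an edge, but it \emph{does} move the points $\pm\ve_1/\bnorm{\pm\ve_1}_{\dual{K}}$ along the boundary of the dual (since the dual itself changes). So the strategy is: pick the translation of $K$ that makes $\ve_1/\bnorm{\ve_1}_{\dual{(K+\vu)}}$ and $-\ve_1/\bnorm{-\ve_1}_{\dual{(K+\vu)}}$ land on vertices of $\dual{(K+\vu)}$ whenever an edge of $\dual{K}$ carried only one point of $C(K)$. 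Concretely, I expect one argues as follows: suppose an edge $E$ of $\dual{K}$ contains in its relative interior exactly one point of $C(K)$, coming from some $\vz \in C_o(K)$. If $\vz = \pm\ve_1$ we can translate so that this point becomes a vertex; if $\vz \ne \pm\ve_1$, then $\vz$ is a boundary lattice point of $\dual{\cs(K)}$, and one shows that minimality of volume forces a second point of $C(K)$ onto $E$ — otherwise, as in the proof of Proposition \ref{prop:one}, one could push the edge $E$ outward (fixing the unique point of $C(K)$ on it) and shrink $K$, contradicting $K \in \mathcal{M}(t)$ — unless the obstruction to pushing is precisely that $\pm\ve_1/\bnorm{\pm\ve_1}_{\dual{K}}$ is already a vertex of $E$, which is case ii).

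The second assertion, that each $K \in \mathcal{M}(t)$ has at most $4$ edges, should follow by a counting argument from Proposition \ref{prop:six}: $|C(K)| \le 6$. Each edge of $\dual{K}$ in case i) consumes at least two points of $C(K)$ (but points can be shared between adjacent edges only if they are vertices, and points of $C(K)$ in the \emph{relative interior} of an edge are not shared), and each edge in case ii) uses up one of the two special points $\pm\ve_1/\bnorm{\cdot}_{\dual{K}}$. Since there are only $6$ points in $C(K)$ and only $2$ of them are the $\ve_1$-points, a short combinatorial check bounds the number of edges by $4$; one must be careful that the $\ve_1$-points may themselves be vertices shared by two edges. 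Finally, for the triangle case one checks directly: if $K \in \mathcal{M}(t)$ is a triangle then $\dual{K}$ is a triangle with three edges, $|C(K)| \in \{4,6\}$, and the points must distribute so that \eqref{eq:property} holds on each edge; with only three edges this is forced, again invoking the push-out/minimality argument to rule out any edge carrying too few points.

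The main obstacle I anticipate is the bookkeeping in the translation step: one must verify that a \emph{single} translation $\vu$ simultaneously repairs \emph{all} the deficient edges (those with only one interior point of $C(K)$), not just one at a time, and that moving $\pm\ve_1/\bnorm{\cdot}$ to a vertex on one edge does not spoil property \eqref{eq:property} on another edge. The resolution is presumably that there are at most two deficient edges (by the $|C(K)| \le 6$ count together with how many points the good edges already absorb), that the two $\ve_1$-points $\ve_1/\bnorm{\ve_1}_{\dual{K}}$ and $-\ve_1/\bnorm{-\ve_1}_{\dual{K}}$ are antipodal-type boundary points lying on two distinct edges, and that one can choose $\vu$ along $\lin\{\ve_1\}$ so that both land on vertices at once — the freedom in $\vu$ exactly matches the number of constraints. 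A secondary subtlety is ensuring the translated body still lies in $\mathcal{M}(t)$: translation preserves membership in $\A(t)$ by the computations in Lemma \ref{lem:translate} combined with \eqref{eq:cs}, and it preserves volume, so it preserves minimality — this should be stated explicitly but is routine.
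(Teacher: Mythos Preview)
Your central mechanism is mistaken. You propose to \emph{translate} $K$ so that $\pm\ve_1/\bnorm{\pm\ve_1}_{\dual{K}}$ land on vertices of the dual, citing Lemma~\ref{lem:translate}. But that lemma says precisely the opposite of what you need: for any direction $\vv$, the point $\vv/\bnorm{\vv}_{\dual{K}}$ lies in the relative interior of the edge of $\dual{K}$ corresponding to the vertex $\vf\in K$ maximizing $\ip{\vv}{\cdot}$, and after translating $K$ by $\vu$ this same vertex (now $\vf+\vu$) still maximizes $\ip{\vv}{\cdot}$ over $K+\vu$. So translation preserves the combinatorial position of every $C(K)$-point, including $\pm\ve_1/\bnorm{\pm\ve_1}_{\dual{K}}$: a point in a relative interior stays in the corresponding relative interior, and a vertex stays a vertex. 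Translation alone cannot produce property~\eqref{eq:property}.

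The paper's device is different and is what your ``push the edge outward fixing the unique $C(K)$-point'' remark is groping toward: one \emph{rotates} the deficient edge $E$ of $\dual{K}$ about its single interior point $\vu\in C(K)$, obtaining a one-parameter family $K_\theta$. Along this family the volume is monotone (the moving vertex of $K$ travels along the line $\{\ip{\vu}{\vx}=1\}$), so minimality forces $\vol(K_\theta)$ to be locally constant; for triangles this already yields a contradiction with~\eqref{eq:upmin}, which is why every triangle in $\mathcal{M}(t)$ satisfies~\eqref{eq:property}. For non-triangles one continues the rotation until it is obstructed, and a case analysis of the possible obstructions shows that either a second $C(K)$-point enters the relative interior of the rotated edge (case~i) or the intersection with the neighbouring edge becomes $\pm\ve_1/\bnorm{\pm\ve_1}$ (case~ii). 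Your counting argument for the $4$-edge bound is in the right spirit, but it only applies once one has a body satisfying~\eqref{eq:property} with the \emph{same} number of edges as the original $K$; this is exactly what the rotation argument supplies and what translation cannot.
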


\begin{proof}   In the following we show that for each
  $K\in\mathcal{M}(t)$  there exists another polygon
  $K'\in\mathcal{M}(t)$ with the same number of edges as $K$
  satisfying property \eqref{eq:property}.  
  Together with Proposition \ref{prop:six} this implies that each
  $K\in\mathcal{M}(t)$ has at most 4 edges. 

  So let $K\in\mathcal{M}(t)$ be a polygon which does not fullfil
  \eqref{eq:property}. 
Then, in view Proposition \ref{prop:one} we may
  assume  that  $\dual{K}$ has an edge $E$   
\begin{equation*}
  E =\left\{\vx\in\R^2: \scal{\vf,\vx}=1\right\}\cap\dual{K}  
\end{equation*}
containing only one point $\vu=(x_0,y_0)\in C(K)$ in its relative
 interior, and  $\frac{\pm \ve_1}{\bnorm{\pm \ve_1}_{\dual{K}}}$ is  not a vertex of $E$. 

Let $\{\vf,\vf_1,\dots,\vf_k\}$ be the vertices of $K$, and
$E_i=\left\{\vx\in\R^2: \scal{\vf_i,\vx}=1\right\}$, $1\leq i\leq k$,
be the  supporting lines of the other edges of $\dual{K}$. For the
lines $E$, $E_i$ we denote by $\ov E$, $\ov E_i$ the corresponding
halfspaces containing $\dual{K}$, i.e.,
\begin{equation*}
   \dual{K}=\ov E\cap \bigcap_{i=1}^k \ov E_i.
\end{equation*}

Let us parametrize $E$ by the angle 
$\theta_0\in[0,2\pi)$ such that 
\begin{equation*}
   E=\{(x,y)\in\R^2
   :(\cos\theta_0)(x-x_0)+(\sin\theta_0)(y-y_0)=0\}. 
\end{equation*}

Then for a  small $\epsilon>0$ and $\theta\in
(\theta_0-\epsilon,\theta_0+\epsilon)$ we consider the line  
	\begin{equation*}
	E(\theta)=\{(x,y)\in\R^2:(\cos\theta)(x-x_0)+(\sin\theta)(y-y_0)=0\},
	\end{equation*}
i.e., we rotate $E$ around $\vu$, and the new polygon 
\begin{equation*}
   \dual{K_\theta}=\ov E(\theta)\cap \bigcap_{i=1}^k \ov E_i.
\end{equation*}
Observe, that 
\begin{equation*}
\dual{(\dual{K_\theta})}=\conv\{\vf_\theta,\vf_1,\dots,\vf_k\} =:K_\theta 
\end{equation*} 
with 
	\begin{equation*}
	\vf_\theta=\left(\frac{\cos\theta}{\cos\theta\,x_0+\sin\theta\,y_0},\frac{\sin\theta}{\cos\theta\,x_0+\sin\theta\,y_0}\right). 
	\end{equation*} 
For $\epsilon$ we always assume that it is so small,  that the
possible  rotations  do not change the number of edges. 
Since 
	\begin{equation}
	\vf_\theta\in\{\vx\in\R^2 : \ip{\vu}{\vx}=1\}.
\label{eq:linetheta}  
	\end{equation}
the volume of $K_\theta$, as a function in  $\theta$, is monotonic in
$[\theta_0-\epsilon,\theta_0+\epsilon]$.  
	
	For each $\vv=(v_1,v_2)\notin C_0(K)$ with $v_2\ne 0$ , we have $\bnorm{\vv}_{\dual{\cs(K)}}>1$.
	Therefore, there exists $s>1$ such that
        $\bnorm{\vv}_{\dual{\cs(K)}}\geq s$ for each  $\vv\notin
        C_0(K)$ with $v_2\ne 0$. Thus, there exists
        $0<\epsilon'<\epsilon$, 
        such that for
        $\theta\in[\theta_0-\epsilon',\theta_0+\epsilon']$, it holds
        $\bnorm{\vv}_{\dual{\cs(K_\theta)}}>1$ for $\vv\notin C_0(K)$,
        $v_2\ne 0$. Since all the points $\vv\in C(K)\setminus\{\vu\}$
        are (also) contained in an edge of $\dual{K}$ different from
        $E$, we have  
        $\bnorm{\vv}_{\dual{K_\theta}}\geq  \bnorm{\vv}_{\dual{K}}$
        for 
         $|\theta-\theta_0|$ small and so 
         $\bnorm{\vv}_{\dual{\cs(K_\theta)}}\geq 1$ for all $\vv\in C_0(K)$.    
         Therefore, after a possible unimodular transformation,  
         we still have $K_\theta\in\A(t)$. Since $\vol(K_\theta)$ is
         monotonic for $|\theta-\theta_0|$ being small and
         $K\in\mathcal{M}(t)$, we conclude   $\vol(K_\theta)=\vol(K)$,
         and thus $K_\theta\in\mathcal{M}(t)$ for $|\theta-\theta_0|$ small.
	
	If $K$ is a triangle, i.e., let $\dual{K}$ has the edges
        $E,E_1,E_2$  and so $K$ has the vertices $\vf,\vf_1,\vf_2$.
        Since $\vol(K_\theta)=\vol(K)$,  \eqref{eq:linetheta} shows
        that the
        line $\{\vx\in\R^2 : \ip{\vu}{\vx}=1\}$  must be parallel to
        the edge $[\vf_1,\vf_2]$ of $K$. 

Let $\vu'\in C_0(K)$ such that
$\vu=\frac{\vu'}{\bnorm{\vu'}_{\dual{K}}}$. If $\vu'\ne\pm \ve_1$ then its
last coordinate is $1$  (cf.~Lemma \ref{lem:line} ) and hence, after an
unimodular transformation we may always assume $\vu'\in\{\pm\ve_1,\pm\ve_2\}$.


 If $\vu'\in\{\pm \ve_1\}$ then the edge $[\vf_1,\vf_2]$ has normal
 vector $\ve_1$, and in view of \eqref{eq:cs} we get that the length
 of the edge $[\vf_1,\vf_2]$ has length 2, and the height of $\vf$ with
 respect to $[\vf_1,\vf_2]$ is $2/t$. Hence, its volume is $2/t$ which
 is not minimal (cf.~\eqref{eq:upmin}) and so we are violating $K\in\mathcal{M}(t)$. 
Analogously, if $\vu'\in\{\pm \ve_2\}$  then the edge $[\vf_1,\vf_2]$ has normal
 vector $\ve_2$, and then  the length 
 of the edge $[\vf_1,\vf_2]$ is $2/t$ and the height of $\vf$ with
 respect to $[\vf_1,\vf_2]$ is $2$. Again, the  volume of the triangle is contradicting
 $K\in\mathcal{M}(t)$.

 Hence, $K$ is not a triangle, and, in particular,  all triangles in
 $\mathcal{M}(t)$ have property \eqref{eq:property}.

So let $K$ be not a triangle. By Lemma \ref{lem:translate}, we may
apply a translation to  $K$ such that the origin is contained in the
relative interior of the vertices adjacent to $\vf$. For convenience
we denotes these two vertices by $\vf_+$ and $\vf_-$, such that
$\vf_-,\vf,\vf_+$ are in clockwise order. Let  $E_-, E_{\theta_0}(=E(\theta_0)), E_+$
be the corresponding supporting lines of $\dual{K}$ and   
\begin{equation*}
	E_{+}\cap\dual{K}=[\vw_1,\vw_2],\quad E(\theta_0)\cap\dual{K}=[\vw_2,\vw_3],\quad E_{-}\cap\dual{K}=[\vw_3,\vw_4]
\end{equation*}
be the associated edges of $\dual{K}$, where $\vw_i$, $1\leq i\leq
4$, are the vertices of these edges.
\begin{center} 
\begin{figure}[hbt]
  	\begin{tikzpicture}[scale=1.5]
	\draw (0,0) node[below] {$\vnull$};
	\filldraw[black] (0,0) circle(0.4pt);
	\draw (-1,1) -- (2,1) -- (2,-1) -- (-1,-1);
	\draw (3,1/3) -- (-2,-4/3);
	\draw (0,0) -- (3,0);
	\draw (0,0) -- (3,3/2);
	\draw (-1,1) node[left] {$\vw_4$};
	\draw (0.5,1) node[above] {points of $C(K)$};
	\draw (2,1) node[right] {$\vw_3$};
	\draw (2.2,0) node[below] {$\vu$};
	\draw (2,-1) node[right] {$\vw_2$};
	\draw (0.5,-1) node[below] {points of $C(K)$};
	\draw (-1,-1) node[left] {$\vw_1$};
	\draw (2,0.5) node[right] {$E(\theta_0)$};
	\draw (1,-0.5) node[below] {$E(\theta_1)$};
	\draw (0,1.5) node {$E_{-}$};
	\draw (0,-1.5) node {$E_{+}$};
	\filldraw[black] (-1,1) circle(0.4pt);
	\filldraw[black] (0,1) circle(0.4pt);
	\filldraw[black] (0.5,1) circle(0.4pt);
	\filldraw[black] (1,1) circle(0.4pt);
	\filldraw[black] (2,1) circle(0.4pt);
	\filldraw[black] (2,0) circle(0.4pt);
	\filldraw[black] (2,-1) circle(0.4pt);
	\filldraw[black] (1,-1) circle(0.4pt);
	\filldraw[black] (0.5,-1) circle(0.4pt);
	\filldraw[black] (0,-1) circle(0.4pt);
	\filldraw[black] (-1,-1) circle(0.4pt);
	\draw (-1,0) node {$\dual{K}$};
      \end{tikzpicture}
      \caption{The non-triangle case}         
\end{figure}
\end{center} 

Since the origin $\vnull$ can only be in at most one of the triangles
$\conv\{\vu,\vw_2,\vw_1\}$ and $\conv\{\vu,\vw_3,\vw_4\}$, we assume
$\vnull\notin\conv\{\vu,\vw_2,\vw_1\}$.
Let $\theta_1\in[\theta_0-\pi,\theta_0+\pi]$ such $E(\theta_1)\cap\dual{K}=[\vu,\vw_1]$. If $\{t\vw_3:t\in\R\}\cap[\vw_1,\vw_2]=\vw'$, then let $\theta_2\in[\theta_0-\pi,\theta_0+\pi]$ such that $E(\theta_2)\cap\dual{K}=[\vu,\vw_3]$.
	
For $\vx,\vy\in\R^2$ we denote by
$\con\{\vx,\vy\}=\{\lambda\vx+\mu\vy:\,\lambda,\mu\geq 0\}$ the  
cone generated by $\vx$ and $\vy$. Now we start to rotate
$E(\theta)$ clockwise around $\vu$ and we denote the so created 
bodies by $\dual{K}_\theta$. 
Then, for each  point
$$\vx\in C_1=\con\{\vw_1,\vu\}\cap\text{cone}\{\vu,-\vw_3\}$$
its norm $\bnorm{\vx}_{\dual{K}_\theta}$ is non-decreasing and
$\bnorm{-\vx}_{\dual{K}_\theta}$ does not change; and  
 for each point
 $$\vx\in C_2=\con\{\vu,\vw_3\}\cap\con\{\vu,-\vw_1\},$$
 $\bnorm{\vx}_{\dual{K}_\theta}$ is non-increasing while
 $\bnorm{-\vx}_{\dual{K}_\theta}$ does not change. Therefore,
  \begin{equation}
    \begin{split}
      \vx\in C_1 &\Rightarrow
      \bnorm{\vx}_{\dual{\cs(K_\theta)}}\geq
      \bnorm{\vx}_{\dual{\cs(K_{\theta_0})}}\\
  \vx\in C_2 &\Rightarrow
      \bnorm{\vx}_{\dual{\cs(K_\theta)}}\leq
      \bnorm{\vx}_{\dual{\cs(K_{\theta_0})}}.   
    \end{split}
  \tag{Q}
  \label{eq:normcomp}  
  \end{equation}
Now let $\epsilon_0$ be  maximal, such that $K_\theta\in\mathcal{M}(t)$,
for all $\theta\in[\theta_0-\epsilon_0,\theta_0]$. 

If $\epsilon_0\geq\theta-\theta_1$, then
$K_{\theta_1}\in\mathcal{M}(t)$, and for each small positive number
$r$, $K_{\theta_1+r}\in\mathcal{M}(t)$. Hence, for  small enough $r$,
the corresponding edge $E_{+}\cap \dual{K_{\theta_1+r}}$ of
$\dual{K_{\theta_1+r}}$ has no point of $C(K_{\theta_1+r})$ in its
relative interior. According to Proposition  \ref{prop:one} this
contradicts  $K_{\theta_1+r}\in\mathcal{M}(t)$.

Hence, we know $\epsilon_0<\theta-\theta_1$. 
If $\vnull\in\conv\{\vw_1,\vu,\vw_3\}$ and $\epsilon_0\geq
\theta-\theta_2$, then  it still holds
$\vnull\notin\{\vu,\vw',\vw_1\}$, and we replace $K$ by $K_{\theta_2}$
and  start the rotating process again.

Hence, we may assume  $\epsilon_0<\theta-\theta_1$ and if
$\vnull\in\conv\{\vw_1,\vu,\vw_3\}$ we also may assume 
$\epsilon_0<\theta-\theta_2$.
Since $K_{\theta_0-\epsilon_0}\in\mathcal{M}(t)$ and $\epsilon_0$ is
maximal, for each small positive number $s$ we know  $K_{\theta_0-\epsilon_0-s}\notin\mathcal{M}(t)$. Then there are five cases:

\smallskip\noindent
(1) There exists a
$\vv'\in\Z^2\setminus\{\pm\ve_1,\pm\ve_2\}$ such that  
$\bnorm{\vv'}_{\dual{\cs(K_{\theta_0-\epsilon_0})}}=1$ and
$\bnorm{\vv'}_{\dual{\cs(K_{\theta_0-\epsilon_0-s})}}<1$ for some
small $s>0$. Then we have   $\vv'\in C_1\cup
C_2$ (the norms of other points are not changed).
By  \eqref{eq:normcomp}  we conclude
$\vv'\in\con\{\vu,\vw_3\}$, which means that
$\frac{\vv'}{\bnorm{\vv'}_{\dual{(K_{\theta_0-\epsilon_0})}}}$ is a
new point of $C(K_{\theta_0-\epsilon_0})$ that lies in the relative
interior of the edge $E(\theta_0-\epsilon_0)$. In this case,
$E(\theta_0-\epsilon_0)$ has two points of
$C(K_{\theta_0-\epsilon_0})$ in the relative interior and hence, $K_{\theta_0-\epsilon_0}$
fulfills  property \eqref{eq:property}.

\smallskip\noindent
(2) $\bnorm{\ve_1}_{\dual{\cs(K_{\theta_0-\epsilon_0})}}=\frac{1}{t}$
and
$\bnorm{\ve_1}_{\dual{\cs(K_{\theta_0-\epsilon_0-s})}}<\frac{1}{t}$
for some small $s>0$. Again, we may  assume  $\ve_1\in C_1\cup
C_2$ and by \eqref{eq:normcomp} $\ve_1\in\con\{\vu,\vw_3\}$. 
Then $\frac{\ve_1}{\bnorm{\ve_1}_{\dual{(K_{\theta_0-\epsilon_0})}}}$ is a
new point of $C(K_{\theta_0-\epsilon_0})$ in the relative interior of
the edge $E_{\theta_0-\epsilon_0}$. But then we have
$\bnorm{\ve_1}_{\dual{\cs(K_{\theta_0})}}>\frac{1}{t}$, implying
$\lambda_1(\dual{\cs(K_{\theta_0})})>\frac{1}{t}$, contradicting $K_{\theta_0}\in\mathcal{M}(t)$.

\smallskip\noindent
(3) $\bnorm{\ve_2}_{\dual{\cs(K_{\theta_0-\epsilon_0})}}=1$ and
$\bnorm{\ve_2}_{\dual{\cs(K_{\theta_0-\epsilon_0-s})}}<1$ for some
small $s>0$. Then  $\ve_2\in C_1\cup C_2$ and by \eqref{eq:normcomp}  
$\ve_2\in\con\{\vu,\vw_3\}$. Then 
$\frac{\ve_2}{\bnorm{\ve_2}_{\dual{(K_{\theta_0-\epsilon_0}})}}$ is a
new point of $C(K_{\theta_0-\epsilon_0})$ in the relative interior of
the edge $E_{\theta_0-\epsilon_0}$. This implies  $\bnorm{\ve_2}_{\dual{\cs(K_{\theta_0})}}>1$, contradicting $K_{\theta_0}\in\mathcal{M}(t)$.

\smallskip\noindent
(4) $\bnorm{\ve_1}_{\dual{\cs(K_{\theta_0-\epsilon_0})}}=\frac{1}{t}$
and $\bnorm{\ve_1}_{\dual{\cs(K_{\theta_0-\epsilon_0-s})}}>\frac{1}{t}$
for some small $s>0$. Then  $\ve_1\in C_1\cup C_2$ and in view of
\eqref{eq:normcomp} we get 
$\ve_1\in\con\{\vu,\vw_1\}$. The
intersection of $E_+$ and $E_{\theta_0-\epsilon_0}$ is actually
$\frac{\ve_1}{\bnorm{\ve_1}_{\dual{(K_{\theta_0-\epsilon_0})}}}$. In
this case, $E(\theta_0-\epsilon_0)$ has  $\vu$ of
$C(K_{\theta_0-\epsilon_0})$ in the relative interior and
$\frac{\ve_1}{\bnorm{\ve_1}_{\dual{K_{\theta_0-\epsilon_0}}}}$ as a
vertex. Hence,  $K_{\theta_0-\epsilon_0}$ satisfies property \eqref{eq:property}.

\smallskip\noindent
(5) There exists a
$\vv'\in\Z^2\setminus\{\pm\ve_1\}$ such that  
$\bnorm{\vv'}_{\dual{\cs(K_{\theta_0-\epsilon_0})}}=1$ and
$\bnorm{\vv'}_{\dual{\cs(K_{\theta_0-\epsilon_0-s})}}>1$ for some
small $s>0$. Then $\vv'\in\con\{\vu,\vw_1\}$ (cf. \eqref{eq:normcomp}), and the
intersection of $E_+$ and $E_{\theta_0-\epsilon_0}$ is actually the
point $\frac{\vv'}{\bnorm{\vv'}_{\dual{(K_{\theta_0-\epsilon_0})}}}$.
If $\vv'\neq\pm\ve_2$ then rotation would not stop here. Hence, let
$\vv'=\ve_2$. Since $K_{\theta_0-\epsilon_0}$ has at least $4$ edges,
and the relative interior of each edge of
$\dual{K_{\theta_0-\epsilon_0}}$ contains a point of
$C(K_{\theta_0-\epsilon_0})$ (cf. Proposition \ref{prop:one}) , and since now  $\ve_2$  is also a vertex
of $C(K_{\theta_0-\epsilon_0})$ we find
$|C(K_{\theta_0-\epsilon_0})|=6$ (cf.~Remark \ref{rem:six}). Hence, there exists a unimodular transformation  
of $K_{\theta_0-\epsilon_0}$ mapping $\ve_2$ to a point
$C(K_{\theta_0-\epsilon_0})\setminus\{\pm\ve_1,\pm\ve_2\}$ and we
start the rotating process with this new body.

\end{proof}

Next we exclude the quadrilateral case. 

\begin{prop}
  There are no quadrilaterals in $\mathcal{M}(t)$.
	\label{prop:noquad}
\end{prop}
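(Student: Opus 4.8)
The plan is to assume that $\mathcal{M}(t)$ contains a quadrilateral, pin down its shape so rigidly that its area can be computed in closed form, and observe that this area strictly exceeds $\vol(T_{1,1/t})$, contradicting \eqref{eq:upmin}. So suppose $\mathcal{M}(t)$ contains a quadrilateral. By (the proof of) Proposition \ref{prop:a2} it then contains a quadrilateral $K$ with property \eqref{eq:property}; in particular each of the four edges of $\dual{K}$ carries (Proposition \ref{prop:one}) a point of $C(K)$ in its relative interior, and any edge carrying only one such point must have $\vu_+:=\ve_1/\bnorm{\ve_1}_{\dual{K}}$ or $\vu_-:=-\ve_1/\bnorm{-\ve_1}_{\dual{K}}$ as a vertex. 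Let $v$ be the number of points of $C(K)$ that are vertices of $\dual{K}$, let $k$ be the number of edges of $\dual{K}$ with exactly one point of $C(K)$ in their relative interior, and let $V_\pm\in\{0,1\}$ record whether $\vu_\pm$ is a vertex of $\dual{K}$. Counting the incidences between edges and their relative-interior points gives $8-k\le|C(K)|-v$ (the $4-k$ remaining edges each carry at least two such points), so with $|C(K)|\le 6$ (Proposition \ref{prop:six}) we get $k\ge 2+v\ge 2+(V_++V_-)$; on the other hand each $\vu_\pm$ is a vertex of at most two edges, hence $k\le 2(V_++V_-)$. These force $V_+=V_-=1$ and then $|C(K)|=6$, $v=2$, $k=4$. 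Thus $\vu_+,\vu_-$ are the only points of $C(K)$ that are vertices of $\dual{K}$, and each of the four edges of $\dual{K}$ carries exactly one further point of $C(K)$ in its relative interior.

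Since $\vu_+=(1/a,0)$ and $\vu_-=(-1/b,0)$, with $a=\suk_K(\ve_1)$, $b=\suk_K(-\ve_1)$, lie on the $x$-axis on opposite sides of $\vnull\in\inte\dual{K}$, the line through them meets $\inte\dual{K}$, so $\vu_+,\vu_-$ are opposite vertices of $\dual{K}$; denote the remaining two vertices by $\vq$ (above the $x$-axis) and $\vw$ (below). By Remark \ref{rem:six}, after a unimodular change of coordinates keeping $\lambda_i(\dual{\cs(K)})$ realised along $\ve_i$, the six points of $C(K)$ are $\vu_+=\ve_1/a$, $\vu_-=-\ve_1/b$, $\ve_2/c=(0,1/c)$, $-\ve_2/d=(0,-1/d)$, $(\ve_1+\ve_2)/e=(1/e,1/e)$, $-(\ve_1+\ve_2)/f=(-1/f,-1/f)$, where $c=\suk_K(\ve_2)$, $d=\suk_K(-\ve_2)$, $e=\suk_K(\ve_1+\ve_2)$, $f=\suk_K(-\ve_1-\ve_2)$; by \eqref{eq:cs} and the definition of the successive minima, $a+b=2/t$ and $c+d=e+f=2$. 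The two points $(0,1/c)$ and $(1/e,1/e)$ lie in the upper half-plane, hence in the relative interiors of the two edges $[\vu_+,\vq]$ and $[\vq,\vu_-]$, one each; since a segment whose endpoints have first coordinates of the same sign meets neither the $y$-axis nor the opposite open half-plane, $(1/e,1/e)$ must lie on the edge $[\vu_+,\vq]$ and $(0,1/c)$ on $[\vq,\vu_-]$, whatever the sign of the first coordinate of $\vq$. Symmetrically $(-1/f,-1/f)$ lies in the relative interior of $[\vu_-,\vw]$ and $(0,-1/d)$ in that of $[\vw,\vu_+]$. Reading off, for each edge $E$, the vertex $\vg$ of $K=\dual{(\dual{K})}$ with $\scal{\vg,\vx}=1$ on $E$, the four vertices of $K$ in cyclic order are $\vg_1=(a,\,e-a)$, $\vg_2=(-b,\,c)$, $\vg_3=(-b,\,b-f)$, $\vg_4=(a,\,-d)$.

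Finally, the shoelace formula applied to $\vg_1,\vg_2,\vg_3,\vg_4$ gives $2\vol(K)=\bigl|4(a+b)-(a+b)^2\bigr|$, the terms in $c,d,e,f$ cancelling via $c+d=e+f=2$; with $a+b=2/t$ and $t\ge 1$ this is $\vol(K)=\tfrac4t-\tfrac2{t^2}$. Since
\[
\tfrac4t-\tfrac2{t^2}-\vol(T_{1,1/t})=\tfrac4t-\tfrac2{t^2}-\Bigl(\tfrac2t-\tfrac1{2t^2}\Bigr)=\tfrac{4t-3}{2t^2}>0\qquad(t\ge 1),
\]
we obtain $\vol(K)>\vol(T_{1,1/t})$, contradicting \eqref{eq:upmin}. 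Hence $\mathcal{M}(t)$ contains no quadrilateral.

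The delicate part is the first half: making the incidence count airtight (checking that property \eqref{eq:property} is genuinely available for quadrilaterals via the proof of Proposition \ref{prop:a2}, that the unimodular reduction in Remark \ref{rem:six} can be chosen to preserve the normalisation $\ve_i\in\lambda_i(\dual{\cs(K)})\dual{\cs(K)}$, and that no edge of $\dual{K}$ degenerates), and then correctly assigning the six points of $C(K)$ to the vertices and edges of $\dual{K}$. Once the coordinates of the $\vg_i$ are in hand the volume computation is a one-line calculation, and — pleasantly — the answer is independent of $c,d,e,f$, so the contradiction is uniform over all admissible shapes of $\dual{K}$.
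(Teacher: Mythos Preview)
Your proof is correct and follows essentially the same route as the paper: use property \eqref{eq:property} together with the bound $|C(K)|\le 6$ to pin down that $\vu_\pm$ are opposite vertices of $\dual K$ with one point of $C(K)$ in each edge, read off the four vertices of $K$, compute $\vol(K)=4/t-2/t^2$, and contradict \eqref{eq:upmin}. The only notable difference is cosmetic: the paper first translates $K$ (via Lemma~\ref{lem:translate}) so that $\bnorm{\pm\ve_2}_{\dual K}=\bnorm{\pm(\ve_1+\ve_2)}_{\dual K}=1$ and then computes, whereas you keep the parameters $c,d,e,f$ and observe that they cancel in the shoelace formula through $c+d=e+f=2$; your incidence count for $V_\pm,v,k$ is also spelled out more explicitly than in the paper.
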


\begin{proof}
  Let $K$  be a quadrilateral in $\mathcal{M}(t)$. According to the
  proof of Proposition \ref{prop:a2} we may assume that $K$ satisfies
  property \eqref{eq:property}. 

Together with  Proposition \ref{prop:six} we conclude that
$\frac{\ve_1}{\bnorm{\ve_1}_{\dual{K}}}$ and
$\frac{-\ve_1}{\bnorm{-\ve_1}_{\dual{K}}}$ are two opposite vertices
of $\dual{K}$ and each of the four edges of $\dual{K}$ has a point of
$C(K)$ in the relative interior. In view of  Remark \ref{rem:six},  we
may assume with $\vu_1=\ve_1+\ve_2$ that $C_0(K)=\{\pm\ve_1, \pm\ve_2,
\pm\vu_1\}$.
	
	We translate $K$ into a position, such that
        $\bnorm{\vu_1}_{\dual{K}}=\bnorm{-\vu_1}_{\dual{K}}=1$ and
        $\bnorm{\ve_2}_{\dual{K}}=\bnorm{-\ve_2}_{\dual{K}}=1$.
        In order to do so, we first find the four supporting
        hyperplanes of $K$ with normal vectors $\pm\vu_1,\pm\ve_2$ and
        find the center of this parallelogram. The center of this
        parallelogram is in the interior of $K$, and thus we can
        translate the origin point to the center of this parallelogram.
	
	Let  $t_1=\bnorm{\ve_1}_{\dual{K}}$ and
        $t_2=\bnorm{-\ve_1}_{\dual{K}}$. Then  $t_1+t_2=\frac{2}{t}$.
	
	\begin{center}
	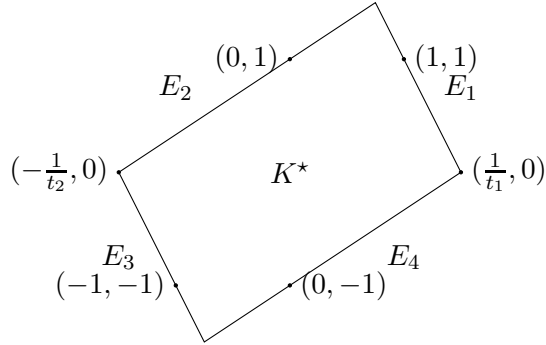
\begin{figure}[hbt]
	\begin{tikzpicture}[scale=1.5]
	\draw (3/2,0) -- (3/4,3/2) -- (-3/2,0) -- (-3/4,-3/2) -- (3/2,0);
	\draw (3/2,0) node[right] {$(\frac{1}{t_1},0)$};
	\draw (1,1) node[right] {$(1,1)$};
	\draw (0,1) node[left] {$(0,1)$};
	\draw (-3/2,0) node[left] {$(-\frac{1}{t_2},0)$};
	\draw (-1,-1) node[left] {$(-1,-1)$};
	\draw (0,-1) node[right] {$(0,-1)$};
	\filldraw[black] (3/2,0) circle(0.4pt);
	\filldraw[black] (1,1) circle(0.4pt);
	\filldraw[black] (0,1) circle(0.4pt);
	\filldraw[black] (-3/2,0) circle(0.4pt);
	\filldraw[black] (-1,-1) circle(0.4pt);
	\filldraw[black] (0,-1) circle(0.4pt);
	\draw (3/2,3/4) node {$E_1$};
	\draw (-1,3/4) node {$E_2$};
	\draw (-3/2,-3/4) node {$E_3$};
	\draw (1,-3/4) node {$E_4$};
	\draw (0,0) node {$\dual{K}$};
	\end{tikzpicture}
	\caption{The polar body of a quadrilateral satisfying the condition \eqref{eq:property}}
	\end{figure}
	\end{center}

        Next we consider all the linear equations describing
        the edges of $\dual{K}$ and so we get the vertices of $K$.  
        \begin{enumerate}
         \item  The affine hull of the edge of $\dual{K}$ containing $(\frac{1}{t_1},0)$
           and $\ve_1+\ve_2$ is given by the equation
           $\{(x,y) : t_1x+(1-t_1)y=1\}$, 
           and so $K$ has the vertex $(t_1,1-t_1)$.
\item  The affine hull of the edge of $\dual{K}$ containing $\ve_2$ and $(-\frac{1}{t_2},0)$ is given by the equation
           $\{(x,y) : 	-t_2x+y=1\}$, 
           and so $K$ has the vertex $(-t_2,1)$.
\item  The affine hull of the edge of $\dual{K}$ containing
  $(-\frac{1}{t_2},0)$ and $-(\ve_1+\ve_2)$ $\ve_2$  is given by the equation
           $\{(x,y) : 	-t_2x-(1-t_2)y=1	\}$, 
           and so $K$ has the vertex $(-t_2,-1+t_2)$.
\item  The affine hull of the edge of $\dual{K}$ containing
  $-\ve_2$ and $(\frac{1}{t_1},0)$  is given by the equation
           $\{(x,y) : 	t_1x-y=1	\}$, 
           and so $K$ has the vertex $(t_1,-1)$.           
          \end{enumerate} 
  Therefore $\vol(K)=\frac{4}{t}-\frac{2}{t^2}$, and hence  $K\notin\mathcal{M}(t)$ (cf. \eqref{eq:upmin}).
	
	\begin{center}
	\begin{figure}[hbt]
	\begin{tikzpicture}[scale=1.5]
	\draw (2,-1) -- (0,1) -- (-2,1) -- (0,-1) -- (2,-1);
	\draw (0,0) node {$K$};
	\draw (2/3,1/3) -- (-2/3,1) -- (-2/3,-1/3) -- (2/3,-1) -- (2/3,1/3);
	\draw (2/3,1/3) node[right] {$(t_1,1-t_1)$};
	\draw (-2/3,1) node[above] {$(-t_2,1)$};
	\draw (-2/3,-1/3) node[left] {$(-t_2,-1+t_2)$};
	\draw (2/3,-1) node[below] {$(t_1,-1)$};
	\filldraw[black] (2/3,1/3) circle(0.4pt);
	\filldraw[black] (-2/3,1) circle(0.4pt);
	\filldraw[black] (-2/3,-1/3) circle(0.4pt);
	\filldraw[black] (2/3,-1) circle(0.4pt);
	\end{tikzpicture}
	\caption{A quadrilateral satisfying the condition \eqref{eq:property}}
	\end{figure}
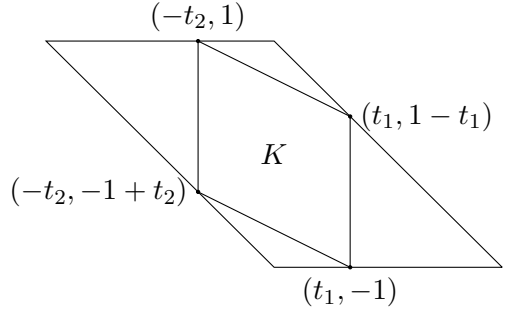
	\end{center}
	
\end{proof}


Finally, we consider the triangles in $\mathcal{M}(t)$.

\begin{prop} Up to
translations and unimodular transformations,  $\mathcal{M}(t)$ contains
only the triangle $T_{1,1/t} =\conv\{(-1/t,1-1/t), (1/t,1), (0,-1)\}$
of volume $\frac{2}{t}-\frac{1}{2}\frac{1}{t^2}$. 
	\label{prop:min}
\end{prop}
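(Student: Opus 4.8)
The plan is to assemble the structural results of this section. By Propositions \ref{prop:one}, \ref{prop:a2} and \ref{prop:noquad} every $K\in\mathcal{M}(t)$ is a polygon with at least three and at most four edges that is not a quadrilateral, hence a triangle, and by the last assertion of Proposition \ref{prop:a2} it satisfies property \eqref{eq:property}; moreover $|C(K)|\in\{4,6\}$ and, after a unimodular transformation, $C_0(K)$ equals $\{\pm\ve_1,\pm\ve_2\}$ or $\{\pm\ve_1,\pm\ve_2,\pm(\ve_1+\ve_2)\}$ (Proposition \ref{prop:six} and Remark \ref{rem:six}). Since $\vol(K)\le\vol(T_{1,1/t})=\frac{2}{t}-\frac{1}{2t^2}$ for $K\in\mathcal{M}(t)$ by \eqref{eq:upmin} and $T_{1,1/t}\in\A(t)$, I would reduce the claim to showing that $\vol(K)\ge\frac{2}{t}-\frac{1}{2t^2}$ for every such triangle, with equality forcing $K$ to be, up to a translation and a unimodular transformation, the triangle $T_{1,1/t}$.

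First I would exclude $|C(K)|=4$. The two points $\pm\ve_i/\bnorm{\pm\ve_i}_{\dual K}$ lie on the line $\lin\{\ve_i\}$ through the origin, which is interior to $\dual K$, so no antipodal pair lies on a common edge of $\dual K$; hence, by Proposition \ref{prop:one}, each of the three edges carries a point of $C(K)$ in its relative interior and at most one point of $C(K)$ is a vertex of $\dual K$. Since by \eqref{eq:property} an edge carrying only one point of $C(K)$ must have $\ve_1/\bnorm{\ve_1}_{\dual K}$ or $-\ve_1/\bnorm{-\ve_1}_{\dual K}$ as one of its two vertices, running through which (if any) of the four points of $C(K)$ is a vertex of $\dual K$ should give a contradiction in every subcase.

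It remains to treat $C_0(K)=\{\pm\ve_1,\pm\ve_2,\pm\vu\}$ with $\vu=\ve_1+\ve_2$. Here I would put $p=\bnorm{\ve_1}_{\dual K}$, $p'=\bnorm{-\ve_1}_{\dual K}$, $q=\bnorm{\ve_2}_{\dual K}$, $q'=\bnorm{-\ve_2}_{\dual K}$, $r=\bnorm{\vu}_{\dual K}$, $r'=\bnorm{-\vu}_{\dual K}$; by \eqref{eq:cs}, together with $\lambda_1(\dual{\cs(K)})=\frac{1}{t}$, $\lambda_2(\dual{\cs(K)})=1$ and $\bnorm{\vu}_{\dual{\cs(K)}}=1$, one gets $p+p'=\frac{2}{t}$ and $q+q'=r+r'=2$. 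The six points of $C(K)$ are $(1/p,0)$, $(-1/p',0)$, $(0,1/q)$, $(0,-1/q')$, $(1/r,1/r)$, $(-1/r',-1/r')$, lying on the $x$-axis, the $y$-axis and the line $y=x$; again no antipodal pair lies on a common edge of $\dual K$, and, using \eqref{eq:property} and Proposition \ref{prop:one}, the affine hull of each edge of $\dual K$ must pass through two of these six points coming from two different antipodal pairs. In the generic case, in which each edge line carries exactly two of the points, the unimodular symmetries of the configuration let me take the three edge lines to be $px+(r-p)y=1$, $-p'x+qy=1$, $(q'-r')x-q'y=1$, so that $K=\conv\{(p,r-p),(-p',q),(q'-r',-q')\}$. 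A direct computation using $p+p'=\frac{2}{t}$ and $q+q'=r+r'=2$ then yields, with $u:=q-r+p$,
\begin{equation*}
  \vol(K)=\frac12\bigl(u^2-\frac{2}{t}\,u+\frac{4}{t}\bigr),
\end{equation*}
a quadratic in $u$ with positive leading coefficient and, since $t\ge 1$, negative discriminant; hence $\vol(K)\ge\frac12\bigl(\frac{4}{t}-\frac{1}{t^2}\bigr)=\frac{2}{t}-\frac{1}{2t^2}$, with equality exactly when $u=\frac{1}{t}$. Since $u$ is invariant under translations of $K$, and since for $u=\frac{1}{t}$ the triangle above is — for every admissible $p,q$ (with $r=p+q-\frac{1}{t}$) — a translate of the one arising from $p=p'=\frac{1}{t}$, $q=q'=r=r'=1$, namely $\conv\{(\frac{1}{t},1-\frac{1}{t}),(-\frac{1}{t},1),(0,-1)\}$, which is a unimodular image of $T_{1,1/t}$, this finishes the proof: combined with \eqref{eq:upmin} it shows that $\mathcal{M}(t)$ consists precisely of the translates and unimodular images of $T_{1,1/t}$, and undoing the normalizations made at the beginning of this section gives Theorem \ref{thm:planar}.

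The step I expect to be hardest is the case distinction underlying the claim that the affine hull of each edge of $\dual K$ passes through two points of $C(K)$ from two different antipodal pairs: one has to enumerate how the six points of $C(K)$ can distribute over the three edges of $\dual K$, in particular the degenerate configurations in which some edge line carries three of them — equivalently, some point of $C(K)$ is a vertex of $\dual K$, which happens precisely when $r=p+q$ or $r'=p'+q'$ — and verify that each such configuration is either unimodularly equivalent to the generic one treated above or forces $\vol(K)\ge\frac{2}{t}$, hence is excluded by \eqref{eq:upmin}. Once the admissible configurations are pinned down, the volume computations and the one-variable minimization are routine.
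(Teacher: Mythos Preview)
Your strategy matches the paper's: reduce to triangles with property \eqref{eq:property}, force $|C(K)|=6$, parametrize the three edge lines of $\dual K$ by the six norms, and minimize the area. The main computational difference is that the paper first translates $K$ so that $\bnorm{\ve_2}_{\dual K}=\bnorm{-\ve_2}_{\dual K}=1$ and $\bnorm{\ve_2-\ve_1}_{\dual K}=\bnorm{\ve_1-\ve_2}_{\dual K}=1$, leaving only $t_1=\bnorm{\ve_1}_{\dual K}$, $t_2=\bnorm{-\ve_1}_{\dual K}$ with $t_1+t_2=2/t$; the area becomes $\tfrac{2}{t}-\tfrac12 t_1t_2$ and AM--GM finishes. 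Your device of keeping all six parameters and observing that $u=p+q-r$ is translation-invariant, with $\vol(K)=\tfrac12(u^2-\tfrac{2}{t}u+\tfrac{4}{t})$, is a pleasant variant that leads to the same minimum and the same equality case; the computations you indicate are correct.

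Two points where the paper is sharper than your outline. First, the exclusion of $|C(K)|=4$ need not be done by a subcase enumeration: once you know $K$ is a triangle satisfying \eqref{eq:property}, either each edge of $\dual K$ carries two points of $C(K)$ in its relative interior (giving $\ge 6$), or exactly one vertex of $\dual K$ lies in $C(K)$ and the opposite edge carries two, the other two edges one each (giving $1+2+1+1=5$, hence $\ge 6$ since $|C(K)|$ is even). Second, the ``degenerate'' configuration you single out as the hardest step --- a vertex of $\dual K$ lying in $C(K)$ --- is in fact the easiest. By \eqref{eq:property} that vertex must be $\pm\ve_1/\bnorm{\pm\ve_1}_{\dual K}$; after the paper's translation one reads off that the edge opposite to it passes through $-\ve_2$ and $\ve_1-\ve_2$ (in the paper's convention $\vu=\ve_2-\ve_1$), forcing $\bnorm{-\ve_1}_{\dual K}=2$ and hence $\bnorm{\ve_1}_{\dual K}=\tfrac{2}{t}-2<0$, an outright contradiction. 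So this case is impossible, not merely suboptimal as you anticipate; no volume estimate is needed there.
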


\begin{proof} 
  Let $K\in\mathcal{M}(t)$. According to Proposition \ref{prop:a2} and
  Proposition \ref{prop:noquad}, $K$ is a triangle 
   satisfying property \eqref{eq:property}.  Thus we know  
  \begin{enumerate}
     \item[1.] only one edge of $\dual{K}$ contains two points of $C(K)$
     while the other
     two edges share a vertex in $C(K)$ and separately have one point
     of $C(K)$ in the relative interior of each edge, or 
   \item[2.]  each edge of $\dual{K}$ contains two points of $C(K)$ in the
   relative interior. 
   \end{enumerate}
	Therefore $|C(K)|$ has to be $6$ (cf.~Proposition \ref{prop:six}). 
	According to Remark \ref{rem:six}, we may assume that up to an
        unimodular transformation $C_0(K)=\{\pm\ve_1, \pm \ve_2,
        \pm(\ve_2-\ve_1)\}$.
        
        Next we discuss the above two different cases.

\smallskip\noindent
1. Here we may assume that $\frac{\ve_1}{\bnorm{\ve_1}_{\dual{K}}}$ is
a vertex of $\dual{K}$.  Then
$\frac{-\ve_1}{\bnorm{-\ve_1}_{\dual{K}}}$ has to be in the edge
opposite to this vertex. Since the two edges of $\dual{K}$ sharing
the vertex $\frac{\ve_1}{\bnorm{\ve_1}_{\dual{K}}}$ must contain
$\frac{\ve_2}{\bnorm{\ve_2}_{\dual{K}}}$ and
$\frac{\ve_1-\ve_2}{\bnorm{\ve_1-\ve_2}_{\dual{K}}}$ in their relative
interior, respectively, 
the remaining edge  contains either
 both  points $\frac{-\ve_2}{\bnorm{-\ve_2}_{\dual{K}}}$,  $\frac{\ve_2-\ve_1}{\bnorm{\ve_2-\ve_1}_{\dual{K}}}$, or only  one of
 these points. Here we just consider the case that this edge contains both
 points, because otherwise each edge contains  two points of $C(K)$
 and this   will be discussed in the next case.

 Since $\bnorm{\ve_2}_{\dual{K}}+\bnorm{-\ve_2}_{\dual{K}}=2$ and
 $\bnorm{\ve_2-\ve_1}_{\dual{K}}+\bnorm{\ve_1-\ve_2}_{\dual{K}}=2$, we
 choose a translation of $K$, such that
 $\bnorm{\ve_2}_{\dual{K}}=\bnorm{-\ve_2}_{\dual{K}}=1$ and
 $\bnorm{\ve_2-\ve_1}_{\dual{K}}=\bnorm{\ve_1-\ve_2}_{\dual{K}}=1$. In
 order to do so, we first find the four supporting hyperplanes of $K$
 with normal vectors $\pm\ve_2,\pm(\ve_2-\ve_1)$ and find the center
 of this parallelogram. Then the  center of this parallelogram has to
 be an interior point  of $K$, and we can translate the origin  to
 this center. 
	
	Then one edge of $\dual{K}$ contains
        $\frac{\ve_1}{\bnorm{\ve_1}_{\dual{K}}}$ and $\ve_2$, one edge
        contains $\ve_2-\ve_1$ and $-\ve_2$, and one edge contains
        $\ve_1-\ve_2$ and
        $\frac{\ve_1}{\bnorm{\ve_1}_{\dual{K}}}$. From this we get
        $\bnorm{-\ve_1}_{\dual{K}}=2$ and thus $\bnorm{\ve_1}_{\dual{K}}=2\bnorm{\ve_1}_{\dual{\cs(K)}}-\bnorm{-\ve_1}_{\dual{K}}=\frac{2}{t}-2<0$, which is impossible.
	
	\begin{center}
	\begin{figure}[hbt]
	\begin{tikzpicture}[scale=1.5]
	\draw (3/2,0) -- (-3/2,2) -- (1/2,-2) -- (3/2,0);
	\draw (0,1) node[right] {$(0,1)$};
	\draw (-1,1) node[left] {$(-1,1)$};
	\draw (0,-1) node[left] {$(0,-1)$};
	\draw (1,-1) node[right] {$(1,-1)$};
	\draw (3/2,0) node[above] {$\frac{\ve_1}{\bnorm{\ve_1}_{\dual{K}}}$};
	\draw (-1/2,0) node[below] {$(-1/2,0)$};
	\filldraw[black] (0,1) circle(0.4pt);
	\filldraw[black] (-1,1) circle(0.4pt);
	\filldraw[black] (0,-1) circle(0.4pt);
	\filldraw[black] (1,-1) circle(0.4pt);
	\filldraw[black] (-1/2,0) circle(0.4pt);
	\filldraw[black] (3/2,0) circle(0.4pt);
	\draw (0,1/3) node {$\dual{K}$};
	\draw (2,0) -- (-2,0);
	\end{tikzpicture}
	\caption{First (impossible) triangle case satisfying the condition \eqref{eq:property}}
	\end{figure}
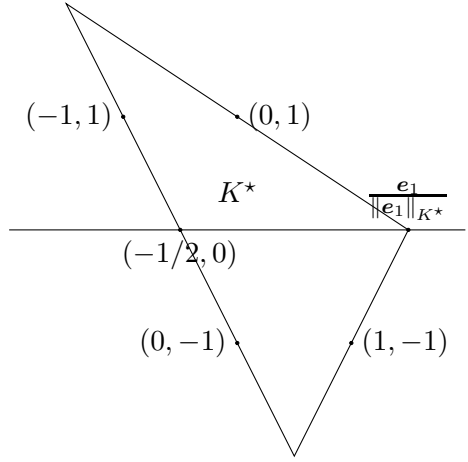
	\end{center}
	
Hence, it remains only to consider the second case, and here we just assume
that

\smallskip\noindent
2.  each edge of $\dual{K}$ contains  two points of $C(K)$. Up to
a rotation by $\pi$, i.e., up to an unimodular transformation,  we may assume that the three edges $U_i$, $1\leq
i\leq 3$,  are given 
	\begin{equation}
	\begin{split}
	& \text{ $U_1$ contains $\frac{\ve_1}{\bnorm{\ve_1}_{\dual{K}}}$ and $\frac{\ve_2}{\bnorm{\ve_2}_{\dual{K}}}$,}\\
	& \text{ $U_2$ contains $\frac{\ve_2-\ve_1}{\bnorm{\ve_2-\ve_1}_{\dual{K}}}$ and $\frac{-\ve_1}{\bnorm{-\ve_1}_{\dual{K}}}$,}\\
	& \text{ $U_3$ contains $\frac{-\ve_2}{\bnorm{-\ve_2}_{\dual{K}}}$ and $\frac{\ve_1-\ve_2}{\bnorm{\ve_1-\ve_2}_{\dual{K}}}$.}
	\end{split}
	\end{equation}
	
	\begin{center}
	\begin{figure}[hbt]
	\begin{tikzpicture}[scale=1.5]
	\draw (2,-1) -- (-1,2) -- (-1,-1) -- (2,-1);
	\filldraw[black] (1,0) circle(0.4pt);
	\filldraw[black] (0,1) circle(0.4pt);
	\filldraw[black] (-1,1) circle(0.4pt);
	\filldraw[black] (-1,0) circle(0.4pt);
	\filldraw[black] (0,-1) circle(0.4pt);
	\filldraw[black] (1,-1) circle(0.4pt);
	\draw (1,0) node[right] {$(\frac{1}{t_1},0)$};
	\draw (0,1) node[above] {$(0,1)$};
	\draw (-1,1) node[left] {$(-1,1)$};
	\draw (-1,0) node[left] {$(-\frac{1}{t_2},0)$};
	\draw (0,-1) node[below] {$(0,-1)$};
	\draw (1,-1) node[below] {$(1,-1)$};
	\draw (1,1) node {$U_1$};
	\draw (-3/2,1/2) node {$U_2$};
	\draw (1/2,-3/2) node {$U_3$};
	\draw (0,0) node {$\dual{K}$};
	\end{tikzpicture}
	\caption{Second triangle case satisfying the condition \eqref{eq:property}}
	\end{figure}
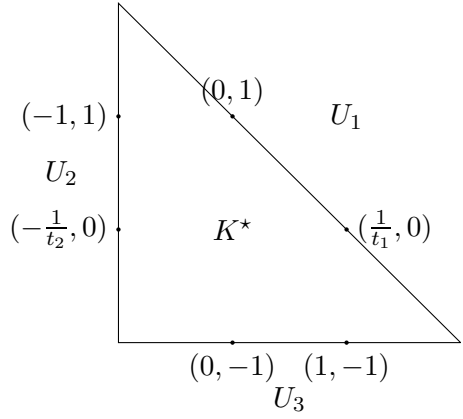
	\end{center}

	Since $\bnorm{\ve_2}_{\dual{K}}+\bnorm{-\ve_2}_{\dual{K}}=2$
        and
        $\bnorm{\ve_2-\ve_1}_{\dual{K}}+\bnorm{\ve_1-\ve_2}_{\dual{K}}=2$,
        we choose a translation of $K$, such that
        $\bnorm{\ve_2}_{\dual{K}}=\bnorm{-\ve_2}_{\dual{K}}=1$ and
        $\bnorm{\ve_2-\ve_1}_{\dual{K}}=\bnorm{\ve_1-\ve_2}_{\dual{K}}=1$. In
        order to do so, we proceed as in case 1., i.e., first we find
        the four supporting hyperplanes of $K$ with normal vectors
        $\pm\ve_2,\pm(\ve_2-\ve_1)$ and find the center of this
        parallelogram. Then the  center of this parallelogram has to
 be an interior point  of $K$, and we can translate the origin  to
 this center.

 Let 
	\begin{equation*}
	\begin{split}
	t_1=\bnorm{\ve_1}_{\dual{K}}, \,  
	t_2=\bnorm{-\ve_1}_{\dual{K}}.
	\end{split}
	\end{equation*}
	Since $\bnorm{\ve_1}_{\dual{\cs(K)}}=\frac{1}{t}$, we have
	\begin{equation}
          t_1+t_2=\frac{2}{t}.
          \label{eq:last} 
      \end{equation}
      
	The affine hull of the edge $U_1$ containing
        $(\frac{1}{t_1},0)$ and $\ve_2$ is given by 
	\begin{equation*}
	\left\{(x,y) : 	t_1x+y=1\right\}.
      \end{equation*}
      	The affine hull of the edge $U_2$ containing $\ve_2-\ve_1$ and
        $(-\frac{1}{t_2},0)$ is given by 
	\begin{equation*}
	\left\{(x,y) : 	-t_2x+(1-t_2)y=1\right\}.
	\end{equation*}
	The affine hull of the edge $U_3$ containing $-\ve_2$ and
        $\ve_1-\ve_2$ is given by 
	\begin{equation*}
	\left\{(x,y) : 	-y=1\right\}.
	\end{equation*}
	Therefore,
	\begin{equation*}
	K=\conv\{(t_1,1), (-t_2,1-t_2), (0,-1)\},
	\end{equation*}
	with volume 
	\begin{equation*}
	\begin{split}
	\vol(K)=&\frac{1}{2}(t_1(1-t_2)+t_2)+\frac{1}{2}t_2+\frac{1}{2}t_1\\
	=&\frac{2}{t}-\frac{1}{2}t_1t_2\\
	\geq&\frac{2}{t}-\frac{1}{2}\frac{1}{t^2}.
	\end{split}
      \end{equation*}
       In the last inequality we have used \eqref{eq:last} and the
       arithmetic-geometric mean inequality. Hence, we have equality
       if and only if 
	\begin{equation}
	t_1=t_2=\frac{1}{t}.
	\label{eq:final}
	\end{equation}
	Since $K$ is supposed to have minimal volume
        (cf. \eqref{eq:upmin}) we have equality.
	
	
	Therefore, in this case, $K$ is a translation of $\conv\{(\frac{1}{t},1),(-\frac{1}{t},1-\frac{1}{t}),(0,-1)\}$. 
	
\end{proof}

\end{document}